\def\hat{\widehat}
\def\wtilde{\widetilde}
\def\bra{\langle}
\def\cet{\rangle}
\definecolor{pasgre}{rgb}{0.67, 0.87, 0.67}
\definecolor{inchworm}{rgb}{0.7, 0.93, 0.36}
\definecolor{aqua}{rgb}{0.0, 1.0, 1.0}
\def\ligt{}
\def\ligtc{}
\def\reali{\mathbb{R}}
\def\zeta{\mathbb{Z}_+}
\newtheorem*{theorem*}{Theorem}
\def \p {\partial}
\def \ba {\begin {eqnarray*} }	
\def \ea {\end {eqnarray*} }
\def \beq {\begin {eqnarray}}
\def \eeq {\end {eqnarray}}
\newcommand{\lig}[1]{\colorbox{yellow}{$\displaystyle #1$}}
\newcommand{\ligc}[1]{\colorbox{aqua}{$\displaystyle #1$}}
\def\lig{}
\def\ligc{}
\newcommand{\sijoitus}[2]%
{\operatornamewithlimits{\Bigl/}_{\!\!\!#1}^{\,#2}}
\title[Discrete regularization of wave equation]{Discrete regularization and convergence of the inverse problem for 1+1 dimensional wave equation}
\author{Jussi Korpela}
\author{Matti Lassas}
\author{Lauri Oksanen}
\date{February 13, 2018}
\begin{document}
\maketitle
\begin{abstract}
An inverse boundary value problem for the 1+1 dimensional wave equation
$(\p_t^2 - c(x)^2 \p_x^2)u(x,t)=0,\quad x\in\reali_+$ is considered. We give a discrete regularization strategy to recover wave speed $c(x)$ when we are given the boundary value of the wave,
$u(0,t)$, that is produced by a single pulse-like source. The regularization strategy gives an approximative wave speed $\wtilde c$, satisfying a Hölder type estimate 
$\| \wtilde c-c\|\leq C \epsilon^{\gamma}$, where $\epsilon$ is the noise level. 

\noindent
\textbf{Keywords:} Inverse problem, regularization theory, wave equation, discretization.
\end{abstract}
\section{Introduction}
We consider an inverse boundary value problem for the wave equation 
$$
(\frac {\p^2}{\p t^2} - c(x)^2 \frac {\p^2}{\p x^2}) u(t,x) = 0,$$
and introduce a discrete regularization strategy to recover the sound speed $c(x)$ by using the knowledge of perturbed and discetized Neumann-to-Dirichlet map $\wtilde \Lambda_{N_1}$.
Our approach is based on the Boundary Control method \cite{Belishev1987, Belishev1992, Tataru1995}.

A variant of the Boundary Control method, called the iterative time-reversal control method, was introduced in \cite{Bingham2008}.
The method was later modified in \cite{Dahl2009}
to focus the energy of a wave at a fixed time
and in \cite{Lauri2013} 
to solve an inverse obstacle problem for a wave equation. 
In \cite{jusa1} we introduced a modification to the iterative time-reversal control method
that is tailored for the 1+1 dimensional wave equation.

The novelty in this paper is that we analyze the effect of the discretization in the regularized solution of the inverse problem. We give a direct discrete regularization method for the
non-linear inverse problem for the wave equation. The result contains an
explicit (but not necessarily optimal) convergence rate.

By referring to direct methods for non-linear problems we mean the explicit construction of non-linear map to solve the problem
without resorting to a local optimization method. In our case the map is given by (\ref{Rstategia}), shown below.
The advantage of direct approaches
is that they do not suffer from the possibility that the algorithm converges to a local minimum.
In particular, they do not require a priori knowledge
that the solution is in a small neighbourhood of a given function.

Classical abstract regularization theory is explained in \cite{Engl1996}. The iterative regularization
of both linear and non-linear inverse problems and convergence rates are discussed
in a Hilbert space setting in \cite{Bissantz2004, Hanke1997, Hohage2008, LuPeRa2007, Mathe2008} and in a Banach space setting in \cite{Hofmann2007, Kaltenbacher2006, Kaltenbacher2008, Kirsch1996, Ramlau2008, Ramlau2006, Resmerita2005}. In section \ref{pamelapulkkinen} we compare our regularization strategy to Morozov's discrepancy principle (MDP). In the context of abstract regularization theory, this principle has been discussed, e.g., in \cite{Scherzer1993}.  

There are currently only a few regularized direct methods for non-linear inverse problems.
An example is a regularisation algorithm for
the inverse problem for the conductivity equation in \cite{KLMS2009}.
 Also, a direct regularized inversion for blind
deconvolution is presented in \cite{Justen2006}.

\section{Regularization Strategy}
\subsection{Continuity of forward map}
We define 
\begin{align}
\label{normipoika}
&\norm{c}_{C^k(M)}=
\sum_{p=0}^{k}\sup_{x\in (0,\infty)}|\frac{\p^p c}{\p x^p}(x)|,
\end{align}
where $M$ denotes the half axis $M=[0,\infty)\subset \R$. We denote the set of bounded $C^k(M)$ -functions by 
\begin{align*}
C^k_b(M)=\{c\in C^k(M); \norm{c}_{C^k(M)}< \infty
\}.
\end{align*}
 Let $C_0,C_1,\lig{L_0,L_1},m>0$ and define the space of \ligt{k times differentiable} velocity functions
\begin{align}
\label{nopeudet}
 \lig{\mathcal V^k}=&\,\{c\in \lig{C^k} (M);
C_0\le c(x)\le C_1 ,\\\nonumber &\,\norm{c}_{\lig{C^k}(M)}\le m,\, c-1 \in \lig{C_0^k([L_0,L_1])} \}.
\end{align}
Here $C_0^k([L_0,L_1])$ is the subspace of functions in $C^k_b(M)$ that are supported on $[L_0,L_1]$.
Let 
\begin{align}
\label{aikapoika}
T> \frac{\lig{L_1}}{C_0}.
\end{align}
For $c\in\lig{\mathcal V^2}$ and 
$f\in L^2(0,2T)$, the boundary value problem 
\begin{align}
\label{dartwader}
&(\frac {\p^2}{\p t^2} - c(x)^2 \frac {\p^2}{\p x^2}) u(\ligc{x,t}) = 0 \quad \text{in $\ligc{M\times (0,2T})$},
\\\nonumber
&\p_x u(\ligc{0,t}) = f(t),
\\\nonumber
&u|_{t = 0} =0,\quad  \p_t u|_{t=0} = 0,
\end{align}
has a unique solution $u=u^f\in H^1(\ligc{M\times (0,2T)})$. Using this solution we define the Neumann-to-Dirichlet operator $ \Lambda= \Lambda_c$,
\begin{align}
\label{lam}
 \Lambda : L^{2}(0,2T) \to L^{2}(0,2T),\quad
 \Lambda f=u^f|_{x=0}.
\end{align}
For a Banach space $E$ we define
\begin{align*}
\mathcal{L} (E)=\{A:E \to E;
A\text{ is linear and continuous}\}.
\end{align*}
Let $\ligc{Z=C^2_b(M)}$  and $Y=\mathcal{L} (L^2(0,2T))$. The operator $\mathcal A$ is defined in the domain $\mathcal D(\mathcal A)=\mathcal V^3$ by setting
\begin{align}
\label{apina}
\mathcal A:\lig{\mathcal D(\mathcal A)}\subset \ligc Z\to\mathcal R (\mathcal{A})\subset Y,\quad 
 \mathcal A(c)= \Lambda_c. 
\end{align}
The notation in (\ref{apina}) means that the range \ligt{$\mathcal R (\mathcal{A})=\mathcal A(\mathcal V^3)$} and the domain \ligt{$\mathcal D(\mathcal A)$} are equipped with the topologies of $Y$ and $\ligc Z$, respectively. Note that the maps (\ref{lam}) and (\ref{apina}) are continuous (see \cite{jusa1}).

\subsection{Regularization strategies with discretizatized measurements}

Let $T$ be as in (\ref{aikapoika}) and $N\in \zeta$. For $n\in\{1,2,3,...,2N\}$ we define the basis functions  as
\begin{align}
\label{kantafunktio}
 \phi_{n,N}(t) = \Big(\frac{N}{T}\Big)^{\frac{1}{2}}1_{[\frac{(n-1)T}{N},\frac{nT}{N} )}(t), \quad t \in [0,2T).
\end{align}
Note that the functions ${\phi_{n,N}}$ are orthonormal in $L^2(0,2T)$. 
Having (\ref{kantafunktio}) we define the space of piecewise constant functions as 
\begin{align}
\label{pcsfjoukko}
\mathcal P^N=span\big\{\phi_{1,N},...,\phi_{2N,N}\big\}\subset L^2(0,2T) 
\end{align}
and 
an orthogonal projection as
\begin{align}
\label{pcsfjoukko2}
P^N:L^2(0,2T)\to\mathcal P^N,\quad P^N(f)=\sum_{j=1}^{2N} \bra f,\phi_{j,N}\cet
_{L^2(0,2T)}\phi_{j,N}(t).
\end{align}
Let $\Lambda$ be as in (\ref{lam}). Using (\ref{pcsfjoukko2}) we define 
\begin{align}
\label{pallokala}
\Lambda_N=P^N\Lambda P^N.
 \end{align}\\
Let $E$ be a Banach space and $\mathcal H\in E$. We denote 
\begin{align}
\label{pallo}
 \mathcal{B}_E(\mathcal H,\epsilon)=\{\wtilde {\mathcal H}\in E : \norm{\wtilde{\mathcal H}-\mathcal H}_E < \epsilon \}.
\end{align}

\subsubsection{A model for a single discrete and noisy measurement}
\label{huippumalli1}
Let $\epsilon_0>0$ and we define
\begin{align}
\label{porraspossu}
l_0(\epsilon_0)=\big\lfloor\frac{4}{7}\log_2 \epsilon_0^{-1}\big\rfloor
\end{align}
and 
\begin{align}
\label{porraspossu2}
N_0(\epsilon_0)=2^{l_0}.
\end{align}
Let $N=2^l\ge N_0$, where $l\in \zeta$. Let $P^N$ be as in (\ref{pcsfjoukko2}) and $\Lambda $ be as in (\ref{lam}). 
Let us define
\begin{align}
\label{porras}
 H(t) = 
\begin{cases}
1, & t\ge 0, \\
0, & t<0.
\end{cases}
\end{align}
Let us define 
 \begin{align}
\label{mittauspoika}
 \wtilde m_{N,\epsilon_0}=P^N\Lambda H + n_{N,\epsilon_0} ,
\end{align}
in which $n_{N,\epsilon_0} \in \mathcal P^N$ represents the error and $\norm{n_{N,\epsilon_0} }_{L^2(0,2T)}\le \epsilon_0$. We consider the quantity $(\epsilon_0,N_0,\wtilde m_{N,\epsilon_0})$ that we call a measurement. 
Let $\mathcal A$ be as in (\ref{apina}) and $H$ be as in (\ref{porras}). We define 
\begin{align}
\label{apina4}
\mathcal A_0:\mathcal D(\mathcal A_0)\subset Z\to\mathcal R (\mathcal A_0)\subset L^2(0,2T),\quad 
 \mathcal A_0(c) =  \mathcal A(c) H= \Lambda H, 
\end{align}
where $\mathcal D(\mathcal A_0)=\mathcal V^3$ . 
Our main result on the reconstruction of $c(x)$ from the measurement $(\epsilon_0,N_0,\wtilde m_{N,\epsilon_0})$ is given by the following theorem. 
 
\begin{theorem}
\label{kaiken_teoria2b}
For the operator $\mathcal A_0:\mathcal \mathcal V^3\subset Z\to L^2(0,2T)$, there exists an admissible regularization strategy  $\mathcal R^{(0)}_{N_0,\alpha_0}$ with the choice of parameter 
\begin{align*}
\alpha_0(\epsilon_0)=a_0\epsilon_0^{\frac{4}{45}},
\end{align*}
satisfying the following: For every $c\in\ligc{\mathcal V^3}$
there are $\wtilde\epsilon_0>$, $a_0>0$, and $C>0$ such that
\begin{align*} 
&\sup\Big\{\norm{\mathcal R^{(0)}_{N_0,\alpha_0}\wtilde m_{N,\epsilon_0}-c}_{\ligc Z}:
\wtilde m_{N,\epsilon_0}\in\mathcal P^{N},N=2^l\ge N_0(\epsilon_0),
\\\nonumber&\qquad  \norm{\wtilde m_{N,\epsilon_0} -P^N\Lambda H}_{L^2(0,2T)} \le \epsilon_0\Big\}\le
C\epsilon_0 ^{\ligc\gamma_0},
\end{align*}
for all $\epsilon_0 \in (0,\wtilde\epsilon_0)$. Here $\gamma_0 =\frac{1}{270}$ and $ N_0(\epsilon_0)$ is as in (\ref{porraspossu2}).
\end{theorem}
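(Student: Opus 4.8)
The plan is to construct $\mathcal R^{(0)}_{N_0,\alpha_0}$ as a composition of three maps and to estimate the total error $\norm{\mathcal R^{(0)}_{N_0,\alpha_0}\wtilde m_{N,\epsilon_0}-c}_Z$ by splitting it into a \emph{regularization bias} (the error for exact, un-discretized data when $\alpha_0>0$), a \emph{discretization error} involving $\norm{P^N\Lambda H-\Lambda H}_{L^2(0,2T)}$, and a \emph{propagated data error} controlled by $\norm{n_{N,\epsilon_0}}_{L^2(0,2T)}\le\epsilon_0$. The three maps are: (a) a desingularization turning the single measurement $\wtilde m_{N,\epsilon_0}$ of (\ref{mittauspoika}) into an approximation of the truncated Neumann-to-Dirichlet operator $\Lambda_N$ of (\ref{pallokala}); (b) the $1+1$-dimensional boundary-control reconstruction of \cite{jusa1}, in which the ill-posed control equations are solved with a regularization (cut-off or Tikhonov) level $\alpha_0$; and (c) a smoothing step producing a genuine element of $Z=C^2_b(M)$. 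Each of the three error terms is estimated as a power of $\epsilon_0$, and the exponent $\tfrac4{45}$ of $\alpha_0$, hence the rate $\gamma_0=\tfrac1{270}$, will fall out of balancing the bias of (b)--(c) against the propagated error, once the choice $N_0(\epsilon_0)\sim\epsilon_0^{-4/7}$ of (\ref{porraspossu2}) is used to render the discretization error subordinate. (Since $P^{N_0}P^N=P^{N_0}$ for $N_0\le N$ both powers of $2$, one may as well let the strategy apply $P^{N_0}$ first, which makes the supremum over $N\ge N_0$ automatically uniform: the effective data is $P^{N_0}\Lambda H$ plus a perturbation of $L^2$-norm $\le\epsilon_0$.)

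For step (a) I would exploit the time-translation invariance of (\ref{dartwader}): since the source in (\ref{mittauspoika}) is the Heaviside function, each $\Lambda\phi_{n,N}$ is a finite difference of time-translates of $\Lambda H$, so from an approximation of $P^N\Lambda H$ one recovers $\Lambda_N$ up to a factor polynomial in $N$ times $\norm{P^N\Lambda H-\Lambda H}_{L^2}+\epsilon_0$; the a priori regularity of $u^H$ for $c\in\mathcal V^3$ (giving, e.g., $\norm{P^N\Lambda H-\Lambda H}_{L^2}\lesssim N^{-1}$) then turns the discretization contribution into a fixed power of $\epsilon_0$ at $N=N_0$. Step (b) assembles the finite linear systems, built from the Blagovestchenskii energy identity, whose solutions are the Neumann controls steering $u^f(T,\cdot)$ onto the indicator functions $1_{[0,x]}$, solves them with regularization level $\alpha_0$, and reads off the travel-time function $\tau(x)=\int_0^x c(y)^{-1}\,dy$, hence $c$, on the grid $\{kT/N\}$. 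Two quantitative inputs are needed here: a conditional Hölder stability estimate for the control step, uniform over the set $\mathcal V^3$ — which is compact in the $C^2$ topology by Arzel\`a--Ascoli, so injectivity of $\mathcal A_0$ already gives a continuous inverse and only the modulus must be made explicit — contributing a bias $\lesssim\alpha_0^{\mu}$ for an explicit $\mu>0$; and an explicit bound, polynomial in $N$ and $\alpha_0^{-1}$, on the amplification of the data error, contributing a propagated error $\lesssim\alpha_0^{-\nu}\bigl(\norm{P^N\Lambda H-\Lambda H}_{L^2}+\epsilon_0\bigr)$.

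Step (c) convolves the rough reconstruction — extended by the constant $1$ outside $[L_0,L_1]$, which is admissible since $c-1$ is supported there for $c\in\mathcal V^3$ — with a smooth kernel of width $\sim\alpha_0$; in the $C^2$ norm this adds a bias $\lesssim\alpha_0\norm{c}_{C^3}$ and amplifies the already-computed error by at most $\alpha_0^{-2}$. Collecting everything, at $N=N_0(\epsilon_0)$,
\begin{equation*}
\norm{\mathcal R^{(0)}_{N_0,\alpha_0}\wtilde m_{N,\epsilon_0}-c}_Z \;\lesssim\; \alpha_0^{\mu}+\alpha_0+\alpha_0^{-\nu-2}\,\epsilon_0^{\theta},
\end{equation*}
with $\theta>0$ produced by the elementary chain of estimates starting from $N_0\sim\epsilon_0^{-4/7}$; minimizing the right-hand side over $\alpha_0$ gives $\alpha_0(\epsilon_0)=a_0\epsilon_0^{4/45}$ and, after tallying the exponents, the rate $\epsilon_0^{1/270}$ for all small $\epsilon_0$. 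It remains to verify the two properties making this an admissible regularization strategy: each $\mathcal R^{(0)}_{N_0,\alpha_0}\colon\mathcal P^N\to Z$ is continuous, being a composition of continuous finite-dimensional maps with a convolution, and $\mathcal R^{(0)}_{N_0,\alpha}\mathcal A_0(c)\to c$ in $Z$ as $\epsilon_0\to0$, which is the $n_{N,\epsilon_0}=0$ case of the displayed bound.

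I expect the main obstacle to be the second quantitative input of step (b): establishing the explicit, uniform-over-$\mathcal V^3$ Hölder stability of the boundary-control reconstruction with a usable exponent, together with the honest tracking of how the two discretizations — the projection $P^N$ and the truncation at level $\alpha_0$ inside the control systems — degrade that exponent. Once the polynomial dependence of all constants on $N$ and $\alpha_0^{-1}$ is pinned down, fixing $\alpha_0$ and reading off the Hölder rate is a routine optimization, and the continuity of $\mathcal A$ noted after (\ref{apina}) takes care of the remaining forward-map estimates.
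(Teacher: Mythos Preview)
Your three-step architecture --- (a) rebuild an approximate Neumann--to--Dirichlet operator from the single Heaviside measurement via time-translation, (b) run the boundary-control reconstruction with regularization level $\alpha_0$, (c) mollify to land in $Z$ --- is exactly the paper's route. The main difference is one of packaging: the paper does not redo the analysis of (b) and (c) from scratch but recognises that, once step~(a) has produced an operator $\wtilde\Lambda_{N_0}$ with $\norm{\wtilde\Lambda_{N_0}-\Lambda_{N_0}}_Y\le C_{P_1}\epsilon_0^{1/5}$, one is precisely in the hypothesis of Theorem~\ref{kaiken_teoria2} with $\epsilon_1=C_{P_1}\epsilon_0^{1/5}$ and $N_1=N_0$. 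The exponents then drop out by arithmetic: $\alpha_0\sim\epsilon_1^{4/9}=\epsilon_0^{4/45}$ and the rate is $\epsilon_1^{1/54}=\epsilon_0^{1/270}$. This spares you the vague optimisation $\alpha_0^\mu+\alpha_0+\alpha_0^{-\nu-2}\epsilon_0^\theta$ with unspecified $\mu,\nu,\theta$; every exponent is already pinned down in \cite{jusa1} and in the proof of Theorem~\ref{kaiken_teoria2}.

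Two points to correct. First, drop the Arzel\`a--Ascoli remark: compactness of $\mathcal V^3$ in $C^2$ gives only an abstract modulus of continuity for $\mathcal A_0^{-1}$, never an explicit H\"older exponent, so it cannot deliver the numbers $4/45$ and $1/270$; the explicit constants come from the chain of Propositions in \cite{jusa1} (and Propositions~\ref{HN}--\ref{PSN} here). Second, the key quantitative content of step~(a) is the operator-norm bound $\norm{\wtilde\Lambda_{N_0}-\Lambda_{N_0}}_Y\lesssim\epsilon_0^{1/5}$: the single-column error $\norm{(\wtilde\Lambda_{N_0}-\Lambda_{N_0})\phi_{1,N_0}}_{L^2}\lesssim N_0^{1/2}\epsilon_0$ gets amplified by another factor $N_0^{1/2}$ when passing to the full operator norm, so the loss is $N_0\epsilon_0$, and the choice of $N_0(\epsilon_0)$ is what converts this into a fixed power of $\epsilon_0$. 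Your sketch has the right idea (``up to a factor polynomial in $N$'') but you should make this factor explicit, since it is what fixes the $1/5$ and hence everything downstream.
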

An explicit bound $\wtilde\epsilon_0$ and the value for constant $a_0$ are given in the proof.
The proof of Theorem \ref{kaiken_teoria2b} is given in Section \ref{convergense}.
\subsubsection{A model for several discrete and noisy measurements}
\label{virhesikapossu}
Let  
\begin{align}
\label{virheoperator}
\mathcal{E}_{N_1}:\mathcal P^{N_1}\to\mathcal P^{N_1},
\end{align}
where $N_1\in\zeta$.
Having $\Lambda_{N_1}$ as in (\ref{pallokala}) we define a discrete and noisy measurement operator
\begin{align}
\label{akuaku}
& \wtilde\Lambda_{N_1}: L^2(0,2T) \to L^2(0,2T), 
\quad 
\\\nonumber&\wtilde\Lambda_{N_1}f = 
\begin{cases}
\Lambda_{N_1}f+\mathcal{E}_{N_1}f, & f \in \mathcal P^{N_1}, \\
\quad\qquad 0, & f \in (\mathcal P^{N_1})^\perp.
\end{cases}
\end{align}
Note that $\mathcal P^{N_1}\cup (\mathcal P^{N_1})^\perp=L^2(0,2T)$. 
With data corresponding to several boundary measurements $(\epsilon_1, N_1, \wtilde\Lambda_{N_1})$ we get
the following results with improved error estimates.
\begin{theorem}
\label{kaiken_teoria2}
For the operator $\mathcal A:\mathcal \mathcal V^3\subset Z\to Y$, there exists an admissible regularization strategy  $\mathcal R^{(1)}_{N_1,\alpha_1}$ with the choice of parameter 
\begin{align*}
\alpha_1(\epsilon_1)=a_1\epsilon_1^{\frac{4}{9}}
\end{align*}
that satisfies the following: For every $c\in\ligc{\mathcal V^3}$
there are $\wtilde\epsilon_1>0$, $a_1>0$, and $C>0$ such that
\begin{align*} 
&\sup\Big\{\norm{\mathcal R^{(1)}_{N_1,\alpha_1}\wtilde\Lambda_{N_1}-c}_{\ligc Z}:
N_1\ge\epsilon_1^{-4},\quad \wtilde\Lambda_{N_1}\in \mathcal L(\mathcal P^{N_1}),
\\\nonumber&\qquad \norm{\wtilde\Lambda_{N_1}-\Lambda_{N_1} }_{{Y}}\le \epsilon_1\Big\}\le
C\epsilon_1 ^{\ligc\gamma_1},
\end{align*}
for all $\epsilon_1 \in (0,\wtilde\epsilon_1)$. Here $\gamma_1 =\frac{1}{54}$.
\end{theorem}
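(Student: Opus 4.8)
The strategy $\mathcal R^{(1)}_{N_1,\alpha_1}$ will be a discretized and $\alpha_1$-regularized form of the Boundary Control reconstruction of \cite{jusa1}: from a (noisy, $N_1$-discrete) operator on $\mathcal P^{N_1}$ one solves a family of regularized control/boundary-source problems on the grid, with $\alpha_1$ the regularization parameter, and reads the velocity off the resulting focusing sequence, as in the explicit formula (\ref{Rstategia}). This is a genuine non-linear map built from finite linear algebra, defined on all of $\mathcal L(\mathcal P^{N_1})$ and depending only on $\wtilde\Lambda_{N_1}$ and $N_1$; hence admissibility will follow at once from the quantitative estimate, and the whole task is to prove that estimate.

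I would obtain it from a two-term bound for the regularized reconstruction, combined with a splitting of the perturbation into a \emph{noise} part and a \emph{discretization} part. Since $\Lambda_{N_1}=P^{N_1}\Lambda_c P^{N_1}$, the operator actually fed to the reconstruction differs from the ideal $\Lambda_c$ by
\begin{align*}
\wtilde\Lambda_{N_1}-\Lambda_c=\bigl(P^{N_1}\Lambda_c P^{N_1}-\Lambda_c\bigr)+\mathcal E_{N_1},
\end{align*}
where $\norm{\mathcal E_{N_1}}_Y\le\epsilon_1$ by hypothesis. For the first summand I would prove a Galerkin-type bound $\norm{\Lambda_{c'}-P^{N}\Lambda_{c'}P^{N}}_Y\le C_\ast N^{-\sigma}$, uniform over $c'\in\mathcal V^3$, using the (at least one order of) smoothing of $\Lambda_{c'}$ and of its adjoint on $\mathcal V^3$ together with the $L^2$-approximation rate of the piecewise-constant projections $P^N$; then $N_1\ge\epsilon_1^{-4}$ makes this summand $\le C_\ast\epsilon_1^{4\sigma}$, hence dominated by the noise for small $\epsilon_1$. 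On the other side, the reconstruction of \cite{jusa1}, run with parameter $\alpha_1$ on data perturbed by $E$, should satisfy
\begin{align*}
\norm{\mathcal R^{(1)}_{N_1,\alpha_1}(\Lambda_c+E)-c}_{Z}\le C_1\,\alpha_1^{\,p}+C_2\,\alpha_1^{-q}\norm{E}_Y,
\end{align*}
the first term being the regularization bias, present already for exact data, and the second the propagation of $E$ through the ill-conditioned control equations, with an explicit amplification exponent $q>0$. Feeding in $E=\wtilde\Lambda_{N_1}-\Lambda_c$, for which $\norm{E}_Y\le C_3\epsilon_1$ once $\epsilon_1$ is small, this yields $\norm{\mathcal R^{(1)}_{N_1,\alpha_1}\wtilde\Lambda_{N_1}-c}_Z\le C_1\alpha_1^{\,p}+C_4\alpha_1^{-q}\epsilon_1$.

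Balancing the two terms by $\alpha_1=a_1\epsilon_1^{1/(p+q)}$ then gives $\norm{\cdot}_Z\le C\epsilon_1^{\,p/(p+q)}$, and bookkeeping of the exponents from \cite{jusa1} should produce $1/(p+q)=\tfrac49$, i.e. the stated $\alpha_1(\epsilon_1)=a_1\epsilon_1^{4/9}$, together with $p/(p+q)=\gamma_1=\tfrac1{54}$, an explicit $a_1$, and a threshold $\wtilde\epsilon_1$ below which $N_1\ge\epsilon_1^{-4}$ indeed forces the discretization contribution to be dominated by the noise contribution; this last point is exactly why the rate improves on Theorem~\ref{kaiken_teoria2b}, whose weaker hypothesis $N\ge N_0(\epsilon_0)\sim\epsilon_0^{-4/7}$ instead lets the discretization error set the rate. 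The hard part will be the two-term estimate for the regularized reconstruction, and in particular pinning down the amplification exponent $q$: one has to control how an operator-norm perturbation of $\Lambda$ propagates through the regularized inversion of the control operators and the associated Blagoveshchenskii-type identity underlying the 1+1 Boundary Control method, and how the limited $C^3$ a priori regularity constrains the bias exponent $p$. Establishing the uniform Galerkin rate $\sigma$ and transferring the grid discretization consistently into these estimates are the remaining, more technical, steps.
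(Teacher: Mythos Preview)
Your high-level plan is the paper's route: bound the Galerkin error $\|\Lambda_{N_1}-\Lambda\|_Y\le CN_1^{-1/4}$ (Lemma~\ref{DLa}, so your $\sigma=\tfrac14$), absorb it into the noise once $N_1\ge\epsilon_1^{-4}$ (Proposition~\ref{DLb}), and then push the effective perturbation of size $\sim\epsilon_1$ through the stability analysis of \cite{jusa1}. One point you gloss over is that the discrete strategy $\mathcal R^{(1)}_{N_1,\alpha_1}$ also replaces $J$, $\boldsymbol H$, $\boldsymbol S$ by discretized versions $J_{N_1}$, $\boldsymbol H^{N_1}$, $\boldsymbol S^{N_1}$; the paper controls each of these separately (Lemma~\ref{JN}, Proposition~\ref{HN}, Lemma~\ref{SN}), and they contribute further $O(N_1^{-\beta})$ terms that are again absorbed under $N_1\ge\epsilon_1^{-4}$. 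Your ``transferring the grid discretization consistently'' remark would have to become exactly this.

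The more substantive issue is your single two-term model $C_1\alpha_1^{\,p}+C_2\alpha_1^{-q}\|E\|_Y$ for the full reconstruction. The final rate $\epsilon_1^{1/54}$ does \emph{not} arise from one balancing in $\alpha_1$. The composition $\Gamma_\nu\circ\Phi\circ W\circ D_h\circ\boldsymbol S\circ\boldsymbol Z_\alpha\circ\boldsymbol H$ carries \emph{three} regularization parameters $(\alpha,h,\nu)$, each balanced at its own stage: $\alpha_1\sim\epsilon_1^{4/9}$ is fixed already at the $\boldsymbol Z_\alpha$ step (Proposition~\ref{PSN}), yielding $\epsilon_1^{1/9}$ after $\boldsymbol S$; then $h\sim\epsilon_1^{1/18}$ balances the finite difference $D_h$ to give $\epsilon_1^{1/18}$; finally $\nu\sim\epsilon_1^{1/54}$ balances the mollifier $\Gamma_\nu$ against the $C^2$-norm to reach $\epsilon_1^{1/54}$. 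So the bookkeeping is a cascade $\tfrac19\to\tfrac1{18}\to\tfrac1{54}$, not a single equation $p/(p+q)=\tfrac1{54}$; if you attempt the two-term estimate with $\alpha_1$ as the only free parameter you will not close the argument, since the biases of $D_h$ and $\Gamma_\nu$ are not driven to zero by $\alpha_1\to0$. (Your aside on Theorem~\ref{kaiken_teoria2b} is also off: the extra loss there comes from rebuilding an operator $\wtilde\Lambda_{N_0}$ out of a single trace measurement, giving $\epsilon_1\sim\epsilon_0^{1/5}$, not from the discretization level setting the rate.)
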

An explicit bound $\wtilde\epsilon_1$ and the value for constant $a_1$ are given in the proof.
The proof of Theorem \ref{kaiken_teoria2} is given in Section \ref{convergense}.
We will give explicit choices for $\mathcal R^{(0)}_{N_0,\alpha_0}$ in formula (\ref{Rstategia2}) and for $\mathcal R^{(1)}_{N_1,\alpha_1}$ in formula (\ref{Rstategia2b}) below. For the convenience of the reader we give a short summary of the regularization strategy. 
Assume that we are given $\wtilde \Lambda_{N}\in\mathcal L(\mathcal P^{N_1})\subset Y$, that is,
 the discrete Neumann-to-Dirichlet map for the unknown wave speed $c(x)$ with measurements errors. Then
 the regularization strategy is obtained by the following steps:

\begin{enumerate}
\item Using operator $\wtilde\Lambda_{N_1}$ in (\ref{sope3}) we constructed a source that produces a wave such that
$u^{f_{\alpha,r}}(t,x)|_{t=T}$ is close to the indicator function   $1_{\mathcal M(r)}(x)$ of the 
domain of influence ${\mathcal M(r)}$---see (\ref{siiri}).  
 \item Using sources $\wtilde f^N_{\alpha,r}$ we approximately compute the 
volumes $V(r)$ of the domains of influences---see (\ref{operaattorit4}), (\ref{sope2}), and Proposition \ref{PSN}. 
  \item Using finite differences we compute approximate values of the 
  derivatives of the volumes of the domain influences $\p_r V(r)$---see   (\ref{timojutila2c}). 
 \item We interpolate the obtained values of $\p_r V(r)$. This determines
 the approximate values of the wave speed $v(r)$ in the travel time coordinates---see (\ref{operaattorit5}) and (\ref{sebastianahoc}). 
  \item Finally, we change the coordinates from travel time coordinates to Euclidean
  coordinates to obtain the approximate values of the wave speed $c(x)$ for $x\in M$---see (\ref{kenguru}).
\end{enumerate}
\subsection{Previous literature}
From the point of view of uniqueness questions, the inverse problem
for the 1+1 dimensional wave equation is equivalent to the one-dimensional inverse boundary spectral problem. The latter problem was
thoroughly studied in the 1950s  \cite{GeLe1951, Krein1951, Marchenko1950} and we refer to \cite[pp. 65--67]{Kabanikhin2005} for a historical overview. In the
1960s Blagove{\v{s}}{\v{c}}enski{\u\i} \cite{Blagovevsvcenskiui1969, Blagovevsvcenskiui1971} developed an
approach to solving the inverse problem for the 1+1 dimensional wave
equation without reducing the problem to the inverse boundary spectral
problem. This and later dynamical methods have the advantage over
spectral methods that they only require data on a finite time
interval. Applications of one-dimensional inverse probems have been discussed widely in
\cite{Blagovevsvcenskiui1966, Kabanikhin2005,KKL2001}. 

The method in the present paper is a variant of the Boundary Control
 method  that was pioneered by M. Belishev \cite{Belishev1987}
and developed by  M. Belishev and Y. Kurylev \cite{BeKu1987,Belishev1992} in the late 1980s and early 1990s.
Of crucial importance for the method is the result by D.\ Tataru
\cite{Tataru1995} concerning
a Holmgren-type uniqueness theorem for non-analytic coefficients.
The Boundary Control method for multidimensional inverse problems has been summarized in \cite{Belishev1997, KKL2001},
and considered for 1+1 dimensional scalar problems in \cite{BeKa1989, BeRFi1994, jusa1} and for  multidimensional scalar problems
in \cite{Katchalov1998, KKLM2004, Kury1995, LaOk2014, LaOk2014b}.
For systems it has been considered in \cite{Kurylev2009, Kurylev2006,Kurylev2015}.
Stability results for the method have been considered in \cite{Anderson2004}
and \cite{Katsuda2007}, and
computational implementations in
\cite{Belishev1999,Belishev2016,Hoop2017,Ivanov2016,Pestov2010}.
An application of the method to blockage detection in water pipes is in preparation \cite{eemeli}.

The inverse problem for the wave equation can also be solved by using complex geometrical optics solutions. These solutions were developed in the context of elliptic
inverse boundary value problems \cite{Sylvester1987}, and in \cite{Nachman1988} they were employed to solve an inverse
boundary spectral problem. Local stability results can be proven using
(real) geometrical optics solutions \cite{Bellassoued2011, StUh1998, uulmanni2013}, and in \cite{lauri2012} a stability result was
proved by using ideas from the Boundary Control method together with complex
geometrical optics solutions. 

 There is an important method
based on Carleman estimates \cite{Bukhgeuim1981}, often called the Bukhgeim-Klibanov method after its founders, that can be used to show stability
results requiring only a
single instance of boundary values, when the initial data for the wave equation is non-vanishing. 
We mention the interesting recent computational work \cite{Baudouin} that is based on this method, and also
another reconstruction method that uses a single measurement \cite{beilina2012approximate, Beilina2008}. 
This method is based
on a reduction to a non-linear integro-differential equation, and
there are several papers on
how to solve this equation (or an approximate version of it)---see \cite{Klibanov2017,  Klibanov2015} for recent results
including computational implementations. 
 
\subsection{Notations}
We will define $\mathcal R^{(1)}_{N_1,\alpha_1}$ as a discrete version of the regulation strategy given in \cite{jusa1}. For that we recall some notation from  \cite{jusa1}.  

We denote the indicator function of a set $E$ by 
$$
1_E(x) = 
\begin{cases}
1, & x \in E,
\\
0, & \text{otherwise}.
\end{cases}
$$
We define 
\begin{align}
\label{operaattorit}
J : L^2(0,2T) \to L^2(0,2T), \quad
J f(t) = \frac{1}{2} \int_0^{2 T} 1_\blacktriangle (t,s)f(s) ds,&
\end{align}
where
$
\blacktriangle = \{(t,s) \in (0,2T)^2;\ \text{$t + s \le 2 T$ and $s > t > 0$}\}.
$
We define the time reversal operator as
\begin{align}
\label{operaattorit2}
R : L^2(0,2T) \to L^2(0,2T), \quad
R f(t) = f(2 T - t),
\end{align}
and the projections as
\begin{align}
\label{operaattorit3}
 P_r f(t) = 1_{(T-r,T)}(t)f(t), \quad r \in [0,T].
\end{align}
Using (\ref{operaattorit}), (\ref{operaattorit2}), and (\ref{operaattorit3}) we define that 
\begin{align}
\label{Koo}
K : Y \to Y, \quad
KL = JL-RLRJ
\end{align}
and
\begin{align}
\label{Hoo}
\boldsymbol{H} &:Y \mapsto C([0,T],Y), 
\quad\boldsymbol{H}L(r) =     
P_r(K L)P_r.
\end{align}

We define a regularized inversion with cutoff as
\begin{align}
\label{Zalfa}
{Z}_{\alpha} : Y \to Y,
\quad
Z_\alpha(L) = \eta_Y(L,\alpha) (L+\alpha)^{-1},
\end{align}
where $\eta_Y : Y \times (0,\infty) \to \R$ is, for example, a continuous function that satisfies
$\eta_Y(L,\alpha) = 1$ when $d(L,Y^+) \le \alpha / 4$ and 
$\eta_Y(L,\alpha) = 0$ when $d(L,Y^+) \ge \alpha / 2$. Here 
$$
d(L,Y^+) = \inf \{\norm{L-L^+}_Y;\ \text{$L^+ \in Y$ is positive semidefinite}\}.
$$
We denote by $\boldsymbol{Z}_{\alpha}$ the lift of ${Z}_{\alpha}$
to $C([0,T],Y)$, that is,
$\boldsymbol{Z}_{\alpha}(L)(r)$ is ${Z}_{\alpha}(L(r))$.
Moreover, we define $b(t) = 1_{(0,T)}(t) (T-t)$ and
\begin{align}
\label{innerproduct}
& \boldsymbol{S}: C([0,T],Y) \to C([0,T]),
\quad \boldsymbol SL(r) =\bra L(r)P_r b,b\cet _{L^2(0,2T)}.
\end{align}

We define the travel time coordinates by
\begin{align}
\label{def_tau}
\tau:[0,\infty)\to[0,\infty), \quad \tau(x) = \int_0^x \frac{1}{c(t)} dt, \quad x \in M,
\end{align}
and the domain of influence 
\begin{align}
\label{siiri}
\mathcal M(r) = \{x \in M; \tau(x) \le r\}, r \ge 0.
\end{align}
The function $\tau$ 
is strictly increasing and we denote its inverse by $\chi$.
Moreover, $V(r)$ denotes the volume of $\mathcal M(r)$ with respect to the measure $dV = c^{-2}dx$, 
where $c$ is the speed of sound in (\ref{dartwader}).
From \cite[Eq. (21)]{jusa1} we see that 
\begin{align}
\label{operaattorit4}
V = \lim_{\alpha \to 0} 
(\boldsymbol S \circ \boldsymbol Z_\alpha \circ \boldsymbol H)(\Lambda).
\end{align}
Moreover, according to \cite[Eq. (19), (20)]{jusa1}, the speed of sound in travel time coordinates $v = c \circ \chi$ satisfies
\begin{align}
\label{operaattorit5}
v(r) = \frac 1 {\p_r V(r)}, \quad \chi(r) = \int_0^r v(t) dt, \quad r > 0. 
\end{align}
Thus $c$ can be computed from $V$. We will next recall how the formula (\ref{operaattorit5}) is regularized in \cite{jusa1}. For small $h>0$ we consider the partition
\begin{align} 
\label{jako}
(0,T)=(0,h) \cup[h,2h)\cup[2h,3h)\cup...\cup
[N_hh-h,N_hh)\cup [N_hh,T),
\end{align}
where $N_h\in \mathbb{N}$ satisfies $T-h\le N_hh < T$.
We define a discretized and regularized approximation of the derivative operator $\p_r$ by
\begin{align} 
\label{operaattori6}
D_{h}:C([0,T])\to L^\infty(0,T), 
\end{align}
\begin{displaymath}
D_{h}(f)(t)= \left\{ \begin{array}{cl}
\frac{f(h)}{h}, & \textrm{if\quad $t \in (0,h)$},\\
\frac{f(jh+h
)-f(jh)}{h}, & \textrm{if\quad $t \in [jh,jh+h)$},\\
 \frac{f(T)-f(N_hh)}{T-N_hh},& \textrm{if\quad $t \in [N_hh,T)$.}\\ 
\end{array} \right.
\end{displaymath}
Let us have 
\begin{align}
\label{arskasika}
\vartheta (t;a,b) = a 1_{(-\infty, a)}(t) + t 1_{[a, b]}(t)+
b 1_{( b,\infty)}(t).
\end{align}
We define an inversion with a cutoff that takes into account the a priori bounds in (\ref{nopeudet}) by 
\begin{align}
\label{rampo}
z : \R \to \R
\quad
z(t) &= \frac{1}{\vartheta (t;C_0,C_1)}.
\end{align}
We denote by $\boldsymbol z$ the lift of $z$
to $L^\infty(0,T)$, that is,
$\boldsymbol z(f)(t) = z(f(t))$.
We define the extension by one
\begin{align}
\label{arska}
E : L^\infty(0,T) \to L^\infty(0,\infty), 
\quad Ef(t) = 
\begin{cases}
f(t), & t \in (0,T), \\
1, & \text{otherwise},
\end{cases}
\end{align}
and set $W = E \circ \boldsymbol z$. We define 
\begin{align}
\label{arskapiks}
\tilde \chi : L^\infty(0,\infty) \to C(0,\infty), \quad
\tilde \chi(f)(r) = \int_0^r f(t) dt.
\end{align}
Note that having $f>0$ in (\ref{arskapiks}) we have it that $\tilde \chi(f)$ is a strictly increasing function. 
Having $L_0,L_1$, as in (\ref{nopeudet}), we define 
\begin{align} 
\label{operaattori16}
\theta_\R: L^\infty(0,\infty) \to L^\infty(\reali), 
\end{align}
\begin{displaymath}
\theta_\R(f)(t)= \left\{ \begin{array}{cl}
1, & \textrm{if\quad $t \in (-\infty,L_0]$},\\
f(t), & \textrm{if\quad $t \in (L_0,L_1)$},\\
 1,& \textrm{if\quad $t \in [L_1,\infty)$.}\\ 
\end{array} \right.
\end{displaymath}
Having $f>0$ and using (\ref{arskapiks}) and (\ref{operaattori16}) we define
\begin{align}
\label{hiiri} 
\Phi : L^\infty(0,\infty) \to L^\infty(\reali),
\quad
\Phi(f) = \theta_\R(f \circ (\tilde \chi(f))^{-1}).
\end{align}
Let us define $\eta\in C^\infty (\reali)$ by
\begin{align} 
\label{konvooluutti}
\ligc{ \eta(x)= \left\{ \begin{array}{cl}
  C\exp\big (\frac{1}{x^2-1}\big ), & \textrm{if\quad $x \in (-1,1)$},\\
0, & \textrm{if\quad $|x|\ge 1$},\\
\end{array} \right.}
\end{align}
\ligtc{where the constant $C>0$ is selected so that $\int_\reali \eta (x)=1$.} For $\nu >0$ we define
\begin{align} 
\label{molliolli}
\ligc{  \eta_\nu (x)=\frac{1}{\nu }\eta \Big (\frac{x}{\nu } \Big ).}
\end{align}
By using convolution we define a smooth approximation to a given function $f\in L^\infty(\reali )$ by setting
\begin{align} 
\label{smootapp}
\Gamma_\nu:L^\infty(\reali)\to C^\infty (\reali ),\quad \Gamma_\nu ( f)= \eta_\nu \ast f.
\end{align}
 Using (\ref{Hoo}),
(\ref{Zalfa}), (\ref{innerproduct}), (\ref{operaattori6}), (\ref{arska}), (\ref{hiiri}), and (\ref{smootapp}) we define the family of operators for the regularization strategy used in \cite{jusa1} by
\begin{align}
\label{Rstategia}
&\mathcal R_{\alpha}:Y\to Z,
\\\nonumber&\mathcal R_{\alpha}=\Gamma_\nu\circ\Phi\circ W\circ D_{h}\circ\boldsymbol{S}\circ \boldsymbol{Z}_{\alpha}\circ\boldsymbol{H},
\end{align}
where $\nu=C\epsilon^{\frac{1}{54}}$, $h=C\epsilon^{\frac{1}{18}}$ and $\alpha(\epsilon)=2^{\frac{13}{9}}T^{\frac{4}{9}}\epsilon^{\frac{4}{9}}$. Note that in \cite{jusa1} we considered perturbations of the Neumann-to-Dirichlet operator of the form  
\begin{align}
\label{virhemitta}
&\wtilde\Lambda=\Lambda+\mathcal{E}, 
\end{align}
where $\mathcal{E}\in Y$
models the measurement error and $\norm{\mathcal{E}}_Y \le \epsilon$. 
Below we will introduce a discretized version of regularization strategy (\ref{Rstategia}) that takes in discretized measurements. To this end we start with auxiliary lemmas.
\subsection{The proofs of the main results}
\label{convergense}

\begin{lemma}
\label{DLa} 
Let $N\in \zeta$. Let $\Lambda $ be as in (\ref{lam}) and $\Lambda_N$ as in (\ref{pallokala}). Then we have
\begin{align*}
 \norm{\Lambda_{N}-\Lambda }_{{Y}}\le CN^{-\frac{1}{4}}.
\end{align*}
\end{lemma}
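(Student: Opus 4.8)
The plan is to estimate $\|\Lambda_N - \Lambda\|_Y = \|P^N\Lambda P^N - \Lambda\|_Y$ by splitting it as
\[
\|P^N\Lambda P^N - \Lambda\|_Y \le \|P^N\Lambda P^N - P^N\Lambda\|_Y + \|P^N\Lambda - \Lambda\|_Y \le \|\Lambda\|_Y\,\|P^N - I\| + \|(P^N - I)\Lambda\|_Y,
\]
using $\|P^N\|=1$. Since $P^N$ is not norm-convergent to $I$ on all of $L^2$, neither term goes to zero by soft arguments alone; the decay $N^{-1/4}$ must come from quantitative smoothing properties of $\Lambda$. The key point is that $\Lambda = \Lambda_c$ maps $L^2(0,2T)$ into a space with extra Sobolev regularity: for $c\in\mathcal V^2$ (indeed $c\in\mathcal V^3$), the solution $u^f$ of (\ref{dartwader}) satisfies $u^f\in H^1(M\times(0,2T))$ with norm controlled by $\|f\|_{L^2}$, so the trace $\Lambda f = u^f|_{x=0}$ lies in $H^{1/2}(0,2T)$ (or at least $H^s$ for some $s>0$) with $\|\Lambda f\|_{H^{1/2}} \le C\|f\|_{L^2}$. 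This is precisely the kind of mapping property established for the forward map in \cite{jusa1}, which we are entitled to invoke.

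The core step is then the standard approximation estimate for piecewise-constant projections: for $g\in H^s(0,2T)$ with $0<s\le 1$,
\[
\|(P^N - I)g\|_{L^2(0,2T)} \le C\,(T/N)^{s}\,\|g\|_{H^s(0,2T)}.
\]
Applying this with $g = \Lambda f$ and $s=1/2$ gives $\|(P^N-I)\Lambda f\|_{L^2} \le C N^{-1/2}\|f\|_{L^2}$, hence $\|(P^N-I)\Lambda\|_Y \le C N^{-1/2}$, which already beats $N^{-1/4}$. For the first term I would exploit the symmetry of $\Lambda$ (the Neumann-to-Dirichlet map is self-adjoint, a fact used implicitly throughout the paper via the positivity considerations around (\ref{Zalfa})): then $\|\Lambda(P^N - I)\|_Y = \|(P^N-I)\Lambda\|_Y \le CN^{-1/2}$ by taking adjoints. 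Combining, $\|\Lambda_N - \Lambda\|_Y \le CN^{-1/2} \le CN^{-1/4}$ for $N\ge 1$, and we are done. If one prefers to avoid self-adjointness, one can instead observe that $\Lambda$ also has a smoothing property in its domain variable, or simply use $\|P^N\Lambda P^N - \Lambda\|_Y \le \|P^N(\Lambda P^N - \Lambda)\|_Y + \|P^N\Lambda - \Lambda\|_Y$ and bound $\|\Lambda P^N - \Lambda\|_Y = \|\Lambda(P^N-I)\|_Y$ using that the range of $\Lambda$ consists of more regular functions together with a duality/interpolation argument.

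The main obstacle is pinning down the exact regularity statement for $\Lambda$: one needs $\Lambda: L^2(0,2T)\to H^s(0,2T)$ bounded for some explicit $s\ge 1/4$, together with the matching projection-error estimate $\|(P^N-I)g\|_{L^2}\le CN^{-s}\|g\|_{H^s}$ (the latter is elementary for $s\in(0,1]$, but one should be slightly careful about boundary effects near $t=0$ and $t=2T$ and about whether $s=1/2$ is on the right side of the fractional Sobolev trace threshold). Since the stated rate is only $N^{-1/4}$ rather than $N^{-1/2}$, there is comfortable slack, so even a crude regularity gain of order $1/2$ in one variable — or $1/4$ applied in each of the two variables of $\Lambda$ — suffices. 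I would therefore organize the proof as: (i) recall from \cite{jusa1} that $u^f\in H^1$ with norm $\lesssim\|f\|_{L^2}$, deduce $\|\Lambda f\|_{H^{1/2}(0,2T)}\lesssim\|f\|_{L^2}$; (ii) prove the one-line projection lemma $\|(I-P^N)g\|_{L^2}\lesssim N^{-1/2}\|g\|_{H^{1/2}}$; (iii) combine via the triangle inequality and self-adjointness of $\Lambda$ to conclude $\|\Lambda_N-\Lambda\|_Y\lesssim N^{-1/2}\le CN^{-1/4}$.
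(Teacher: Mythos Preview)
Your proposal is correct and follows the same architecture as the paper's proof: split $\Lambda_N-\Lambda$ via the triangle inequality, use the regularity gain $\Lambda:L^2(0,2T)\to H^{1/2}(0,2T)$ together with a projection-error bound for $I-P^N$ on $H^{1/2}$, and handle the term $\Lambda(P^N-I)$ by self-adjointness of $\Lambda$ and $P^N$. The only substantive difference is in how the projection-error bound is obtained. The paper proves $\|I-P^N\|_{H^2\to L^2}\le C N^{-1}$ (via the embedding $H^2\hookrightarrow C^1$ and a pointwise estimate) and then interpolates with the trivial $L^2\to L^2$ bound to reach $\|I-P^N\|_{H^{1/2}\to L^2}\le C N^{-1/4}$; this is exactly the source of the exponent $1/4$ in the lemma. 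Your route---Poincar\'e on each subinterval to get $\|I-P^N\|_{H^1\to L^2}\le C N^{-1}$, then interpolation---gives the sharper $\|I-P^N\|_{H^{1/2}\to L^2}\le C N^{-1/2}$ and hence $\|\Lambda_N-\Lambda\|_Y\le C N^{-1/2}$, which of course implies the stated $N^{-1/4}$. So your argument is a mild strengthening of the paper's, at the cost of invoking the $H^1$ Poincar\'e inequality rather than an elementary $C^1$ bound; either way the proof is short and the structure is identical.
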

Here $C=C(T)>0$ depends on $T$.
\begin{proof}
By (\ref{lam}) and the trace theorem we have
\begin{align}
\label{Lamestim}
 \norm{\Lambda}_{L^2(0,2T)\to H^{\frac{1}{2}}(0,2T)}
\le C_{Lam}.
\end{align}
By (\ref{pcsfjoukko2}) we have
\begin{align}
\label{virheprojektiolle7}
 \norm{I-P^N}_{L^2(0,2T)\to L^2(0,2T)}
\le2.
\end{align}
Let $f\in H^2(0,2T)$. By (\ref{pcsfjoukko2}) and $H^2(0,2T)\hookrightarrow C^1([0,2T])$ we have
\begin{align}
\label{virheprojektiolle3}
 \norm{f -P^N f}_{L^\infty (0,2T)}\le
 \frac{T}{N}\norm{f}_{C^1([0,2T])}.
\end{align}
Thus 
\begin{align}
\label{virheprojektiolle4}
 \norm{f -P^N f}_{L^2(0,2T)}\le(2T)^{\frac{1}{2}}\norm{f -P^N f}_{L^\infty (0,2T)}
\le(2T)^{\frac{1}{2}} \frac{T}{N}\norm{f}_{C^1([0,2T])}.
\end{align}
By (\ref{virheprojektiolle4}) and having $C_{sob}=\norm{I}_{H^2(0,2T)\to C^1([0,2T])}$ we get
\begin{align}
\label{virheprojektiolle5}
 \norm{I-P^N}_{H^2(0,2T)\to L^2(0,2T)}
\le(2T)^{\frac{1}{2}} \frac{T}{N}C_{sob}.
\end{align}
Using interpolation theory with (\ref{virheprojektiolle7}) and (\ref{virheprojektiolle5}) we have
\begin{align}
\label{virheprojektiolle8}
 \norm{I-P^N}_{H^{\frac{1}{2}}(0,2T)\to L^2(0,2T)}
\le 2^{\frac{3}{4}}\big((2T)^{\frac{1}{2}} \frac{T}{N}C_{sob}\big)^{\frac{1}{4}}.
\end{align}
For self-adjoint operators $P^N$ and $\Lambda$ we have
\begin{align}
\label{virheprojektiolle10}
 \norm{\Lambda -\Lambda P^N }_{{Y}}=
  \norm{(\Lambda -\Lambda P^N )^*}_{Y}=
  \norm{(I - P^N)\Lambda }_{{Y}}.
\end{align}
By (\ref{Lamestim}) and (\ref{virheprojektiolle8}) we have
\begin{align}
\label{virheprojektiolle12}
 \norm{\Lambda -\Lambda P^N }_{{Y}}=\norm{(I - P^N)\Lambda }_{{Y}}\le
 2^{\frac{3}{4}}\big((2T)^{\frac{1}{2}} \frac{T}{N}C_{sob}\big)^{\frac{1}{4}}C_{Lam}.
\end{align}
Thus
\begin{align}
\label{virheprojektiolle13}
 \norm{P^N \Lambda -P^N \Lambda P^N }_{{Y}}\le
 2^{\frac{3}{4}}\big((2T)^{\frac{1}{2}} \frac{T}{N}C_{sob}\big)^{\frac{1}{4}}C_{Lam}.
\end{align}
By (\ref{virheprojektiolle10}), (\ref{virheprojektiolle12}), and (\ref{virheprojektiolle13}) 
we have
\begin{align}
\label{virheprojektiolle14}
 \norm{\Lambda -P^N \Lambda P^N }_{{Y}}\le
 2^{\frac{7}{4}}\big((2T)^{\frac{1}{2}} \frac{T}{N}C_{sob}\big)^{\frac{1}{4}}C_{Lam}\le
CN^{-\frac{1}{4}}.
\end{align}
\end{proof}
\begin{proposition}
\label{DLb} 
Let $\epsilon>0$, $N\in \zeta$ and $N\ge\epsilon^{-4}$ . Let $\Lambda $ be as in (\ref{lam}) and $\Lambda_N$ be as in (\ref{pallokala}). Assume that $\wtilde\Lambda_{N}\in \mathcal{B}_Y (\Lambda_N,\epsilon)$, then 
\begin{align*}
 \norm{\wtilde\Lambda_{N}-\Lambda }_{{Y}}\le C_2\epsilon.
\end{align*}
\end{proposition}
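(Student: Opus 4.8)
The plan is to combine the triangle inequality with the discretization estimate already established in Lemma \ref{DLa}. Write
\begin{align*}
\norm{\wtilde\Lambda_{N}-\Lambda }_{Y}
\le \norm{\wtilde\Lambda_{N}-\Lambda_{N}}_{Y}
+\norm{\Lambda_{N}-\Lambda}_{Y}.
\end{align*}
The first term on the right is controlled by the hypothesis $\wtilde\Lambda_{N}\in\mathcal B_Y(\Lambda_N,\epsilon)$, which by the definition (\ref{pallo}) gives $\norm{\wtilde\Lambda_{N}-\Lambda_{N}}_{Y}<\epsilon$. The second term is exactly what Lemma \ref{DLa} bounds: $\norm{\Lambda_{N}-\Lambda}_{Y}\le CN^{-1/4}$ with $C=C(T)$.

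Next I would exploit the standing assumption $N\ge\epsilon^{-4}$. Raising both sides to the power $-1/4$ (a decreasing operation) yields $N^{-1/4}\le(\epsilon^{-4})^{-1/4}=\epsilon$, so $\norm{\Lambda_{N}-\Lambda}_{Y}\le C\epsilon$. Substituting both bounds back gives
\begin{align*}
\norm{\wtilde\Lambda_{N}-\Lambda }_{Y}\le \epsilon + C\epsilon = (1+C)\epsilon,
\end{align*}
so the claim holds with $C_2=C_2(T):=1+C$, where $C$ is the constant from Lemma \ref{DLa}.

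There is essentially no obstacle here: the statement is a bookkeeping consequence of Lemma \ref{DLa} together with the calibration $N\ge\epsilon^{-4}$, which is precisely engineered so that the deterministic discretization error $N^{-1/4}$ is dominated by the noise level $\epsilon$. The only point worth a remark is that the constant $C_2$ inherits its dependence solely on $T$ (through $C_{sob}$, $C_{Lam}$ in the proof of Lemma \ref{DLa}) and is independent of $\epsilon$ and $N$; in particular the bound is uniform over all admissible $\wtilde\Lambda_N$ in the ball, which is what is needed when this proposition is invoked in the proof of Theorem \ref{kaiken_teoria2}.
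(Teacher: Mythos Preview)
Your proof is correct and follows exactly the paper's approach: triangle inequality, Lemma \ref{DLa} for the discretization term, and the relation $N\ge\epsilon^{-4}$ to convert $N^{-1/4}$ into $\epsilon$, yielding $C_2=C+1$. There is nothing to add.
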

Here $C_2=C_2(T)>0$ depends on $T$.
\begin{proof}

Using Lemma \ref{DLa} and having $N\ge\epsilon^{-4}$
we get
\begin{align}
\label{virheprojektiolle15}
 \norm{\wtilde\Lambda_{N}-\Lambda }_{{Y}}\le  \epsilon +
 CN^{-\frac{1}{4}}\le (C+1)\epsilon.
\end{align}
\end{proof}
 Let $J$ be as in (\ref{operaattorit})  and using (\ref{pcsfjoukko2}) we define
\begin{align}
\label{pridekulkue}
J_N=P^NJP^N.
\end{align}
\begin{lemma}
\label{JN}
Let $N\in\zeta$. Let $J$ be as in (\ref{operaattorit}) and $J^N$ be as in (\ref{pridekulkue}). Then we have
\begin{align*}
 \norm{J-J_N}_{Y}\le
CN^{-\frac{1}{2}}.
\end{align*}
\end{lemma}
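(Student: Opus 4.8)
The plan is to follow the same template as the proof of Lemma \ref{DLa}: bound $\|J-P^NJP^N\|_Y$ by splitting $J-P^NJP^N = (I-P^N)J + P^NJ(I-P^N)$ and controlling each piece through the smoothing properties of $J$ together with the projection estimates already established in the proof of Lemma \ref{DLa}. The key structural fact is that $J$, being an integration operator of the form $Jf(t) = \tfrac12\int_0^{2T} 1_\blacktriangle(t,s)f(s)\,ds$, is smoothing: it maps $L^2(0,2T)$ into $H^1(0,2T)$ boundedly. Indeed $\partial_t Jf(t) = -\tfrac12 f((2T-t)^{-})\,1_{(0,T)}(t) \cdot(\text{from the }t+s\le 2T\text{ edge}) - \tfrac12 f(t)\,1_{(0,2T)}(t)\cdot(\text{from the }s>t\text{ edge})$, up to signs, so $\|Jf\|_{H^1}\le C_J\|f\|_{L^2}$ for some $C_J=C_J(T)$. (If one prefers to avoid the pointwise derivative computation, boundedness $J:L^2\to H^1$ follows because the kernel $1_\blacktriangle$ is of bounded variation in each variable uniformly.)

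First I would record $\|(I-P^N)\|_{L^2\to L^2}\le 2$ from (\ref{virheprojektiolle7}) and the estimate $\|(I-P^N)\|_{H^1(0,2T)\to L^2(0,2T)}\le C N^{-1/2}$; the latter is the $H^1$-version of (\ref{virheprojektiolle3})--(\ref{virheprojektiolle4}), obtained either directly (on each subinterval of length $T/N$, $\|g-P^N g\|_{L^2}\le (T/N)\|g'\|_{L^2}$ on that interval, summing gives $\|g-P^Ng\|_{L^2(0,2T)}\le (T/N)\|g'\|_{L^2(0,2T)}$) or by interpolating (\ref{virheprojektiolle7}) and (\ref{virheprojektiolle5}) at $\theta=1/2$ to land in $H^1$ rather than $H^{1/2}$. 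Using the self-adjointness of $P^N$ and $J$ (note $J$ is self-adjoint since $1_\blacktriangle$ is symmetric in $(t,s)$, as $t+s\le 2T$ and the condition $s>t>0$ paired with $t>s>0$ covers the symmetric region — actually $\blacktriangle$ is not symmetric, so here one instead writes $\|(I-P^N)J\|_Y = \|J(I-P^N)\|_Y$ via $((I-P^N)J)^* = J^*(I-P^N)$ and observes $\|J^*\|_{L^2\to H^1}=\|J\|_{L^2\to H^1}$ as well since the adjoint kernel $1_{\blacktriangle}(s,t)$ is likewise of bounded variation), one gets
\begin{align*}
\|(I-P^N)J\|_Y \le \|(I-P^N)\|_{H^1\to L^2}\,\|J\|_{L^2\to H^1}\le C_J\, C\, N^{-1/2},
\end{align*}
and the same bound for $\|P^NJ(I-P^N)\|_Y = \|P^N J^* (I-P^N)\|_Y$ — wait, more directly $\|P^N J(I-P^N)\|_Y \le \|P^N\|\,\|J(I-P^N)\|_Y$ and $\|J(I-P^N)\|_Y = \|(I-P^N)J^*\|_Y \le \|(I-P^N)\|_{H^1\to L^2}\|J^*\|_{L^2\to H^1}$.

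Then I would assemble: $\|J-J_N\|_Y = \|J - P^NJP^N\|_Y \le \|(I-P^N)J\|_Y + \|P^NJ(I-P^N)\|_Y \le C N^{-1/2}$, mirroring the chain (\ref{virheprojektiolle10})--(\ref{virheprojektiolle14}). The main obstacle I anticipate is the mapping property $J:L^2(0,2T)\to H^1(0,2T)$ — specifically getting the pointwise formula for $\partial_t Jf$ right, keeping careful track of the two boundary contributions of the triangular region $\blacktriangle$ (the diagonal $s=t$ and the anti-diagonal $t+s=2T$), and handling the adjoint $J^*$, whose kernel is the reflected region; everything else is a routine repetition of the interpolation-and-projection estimates from Lemma \ref{DLa}.
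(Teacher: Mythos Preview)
Your proposal is correct and follows essentially the same approach as the paper: assert $\|J\|_{L^2\to H^1}\le C_J$, obtain $\|I-P^N\|_{H^1\to L^2}\le CN^{-1/2}$ by interpolating (\ref{virheprojektiolle7}) and (\ref{virheprojektiolle5}) at $\theta=\tfrac12$, bound $\|(I-P^N)J\|_Y$ directly, pass to $\|J(I-P^N)\|_Y=\|(I-P^N)J^*\|_Y$ via the adjoint, and add the two pieces. The paper is terser about the $L^2\to H^1$ mapping properties of $J$ and $J^*$ (it simply writes (\ref{JnormiH}) without justification), and your momentary detour through self-adjointness of $J$ is unnecessary since, as you noted, the adjoint argument suffices.
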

Here $C=C(T)>0$ depends on $T$.
\begin{proof}
By (\ref{operaattorit}) we have
\begin{align}
\label{JnormiH}
 \norm{J}_{L^2(0,2T)\to H^{1}(0,2T)}\le C_{J}.
\end{align}
Using interpolation with (\ref{virheprojektiolle7}) and (\ref{virheprojektiolle5}) we have
\begin{align}
\label{virheprojektiolle8b}
 \norm{I-P^N}_{H^{1}(0,2T)\to L^2(0,2T)}
\le 2^{\frac{1}{2}}\big((2T)^{\frac{1}{2}} \frac{T}{N}C_{sob}\big)^{\frac{1}{2}}.
\end{align}
By (\ref{JnormiH}) and (\ref{virheprojektiolle8b}) we have
\begin{align}
\label{virheprojektiolle9b}
 \norm{(I-P^N)J}_{L^{2}(0,2T)\to L^2(0,2T)}
\le C_{J} 2^{\frac{1}{2}}\big((2T)^{\frac{1}{2}} \frac{T}{N}C_{sob}\big)^{\frac{1}{2}}.
\end{align}
We have
\begin{align}
\label{virheprojektiolle9c}
 \norm{J(I-P^N)}_{L^{2}(0,2T)\to L^2(0,2T)}=
 \norm{(I-P^N)J^*}_{L^{2}(0,2T)\to L^2(0,2T)}.
\end{align}
By (\ref{JnormiH}), (\ref{virheprojektiolle9b}), and (\ref{virheprojektiolle9c})
we get
\begin{align}
\label{virheprojektiolle9d}
 \norm{J-P^NJP^N}_{Y}\le
 \norm{(I-P^N)J}_{Y}+\norm{P^N}_{Y}\norm{J(I-P^N)}_{Y}
\le CN^{-\frac{1}{2}}.
\end{align}
\end{proof}

Let $J$ be as in (\ref{operaattorit}), $J^N$ be as in (\ref{pridekulkue}), and $R$ be as in (\ref{operaattorit2}). We define 
\begin{align}
\label{Koo2}
K^N : Y \to Y, \quad
K^N L = J_NL-RLRJ_N.
\end{align}
By (\ref{operaattorit3}) and (\ref{Koo2}) we define   
\begin{align}
\label{Hn}
\boldsymbol{H}^N &:Y \to C([0,T],Y), 
\quad\boldsymbol{H}^NL(r) =     
P_r(K^N L)P_r.
\end{align}
\begin{proposition}
\label{HN}
Let $\epsilon\in (0,1)$, $\kappa_3>0$, and $N\ge\kappa_3\epsilon^{-4}$. Let $\Lambda $ be as in (\ref{lam}), $\boldsymbol{H}$ be as in (\ref{Hoo}), and $\boldsymbol{H}^N$ be as in (\ref{Hn}). Assume that $ \wtilde\Lambda_N\in \mathcal{B}_Y ( \Lambda,\epsilon)$, then we have  
\begin{align*}
 \norm{\boldsymbol{H}^N\wtilde\Lambda_N -\boldsymbol{H}\Lambda }_{C([0,T],Y)}\le C_3\epsilon.
\end{align*}
\end{proposition}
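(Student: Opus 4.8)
The plan is to estimate by the triangle inequality
$$
\norm{\boldsymbol{H}^N\wtilde\Lambda_N - \boldsymbol{H}\Lambda}_{C([0,T],Y)}
\le \norm{\boldsymbol{H}^N(\wtilde\Lambda_N - \Lambda)}_{C([0,T],Y)}
+ \norm{(\boldsymbol{H}^N - \boldsymbol{H})\Lambda}_{C([0,T],Y)},
$$
and to treat the ``perturbation'' term and the ``discretization'' term separately. For the first term I would use that $\boldsymbol{H}^N$ is linear and bounded uniformly in $N$: since $P^N$ is an orthogonal projection we have $\norm{J_N}_Y=\norm{P^NJP^N}_Y\le\norm{J}_Y$, and since $\norm{R}_Y=1$ and $\norm{P_r}_Y\le 1$ for every $r\in[0,T]$, the definitions (\ref{Koo2}) and (\ref{Hn}) give, for all $L\in Y$, $N$, and $r$,
$$
\norm{\boldsymbol{H}^NL(r)}_Y=\norm{P_r(J_NL-RLRJ_N)P_r}_Y\le 2\norm{J_N}_Y\norm{L}_Y\le 2\norm{J}_Y\norm{L}_Y,
$$
where $\norm{J}_Y<\infty$ by (\ref{JnormiH}). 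Applying this with $L=\wtilde\Lambda_N-\Lambda$ and using the hypothesis $\wtilde\Lambda_N\in\mathcal B_Y(\Lambda,\epsilon)$ yields $\norm{\boldsymbol{H}^N(\wtilde\Lambda_N-\Lambda)}_{C([0,T],Y)}\le 2\norm{J}_Y\epsilon$.

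For the discretization term I would compute, from (\ref{Koo}) and (\ref{Koo2}),
$$
K^N\Lambda-K\Lambda=(J_N-J)\Lambda-R\Lambda R(J_N-J),
$$
so that $\norm{K^N\Lambda-K\Lambda}_Y\le 2\norm{\Lambda}_Y\norm{J_N-J}_Y$, where $\norm{\Lambda}_Y\le C_{Lam}$ by (\ref{Lamestim}). By Lemma \ref{JN} we have $\norm{J_N-J}_Y\le CN^{-\frac12}$, and the hypothesis $N\ge\kappa_3\epsilon^{-4}$ together with $\epsilon\in(0,1)$ gives $N^{-\frac12}\le\kappa_3^{-\frac12}\epsilon^{2}\le\kappa_3^{-\frac12}\epsilon$. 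Since $P_r(\cdot)P_r$ does not increase the $Y$-norm, this gives, uniformly in $r$,
$$
\norm{(\boldsymbol{H}^N-\boldsymbol{H})\Lambda(r)}_Y=\norm{P_r(K^N\Lambda-K\Lambda)P_r}_Y\le 2C_{Lam}C\kappa_3^{-\frac12}\epsilon.
$$

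Combining the two bounds yields the claim with $C_3=2\norm{J}_Y+2C_{Lam}C\kappa_3^{-\frac12}$, which depends only on $T$ and $\kappa_3$. I do not expect a genuine obstacle: the argument is essentially bookkeeping, and the only points needing care are that the operator-norm bound for $\boldsymbol{H}^N$ is uniform both in the discretization parameter $N$ and in the radius $r\in[0,T]$, and that the discretization error $\norm{J_N-J}_Y\sim N^{-\frac12}$ is genuinely absorbed into an $O(\epsilon)$ term — which it is, with room to spare (contributing only $O(\epsilon^{2})$), precisely because the measurement model forces $N\gtrsim\epsilon^{-4}$.
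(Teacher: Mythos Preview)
Your proof is correct and follows essentially the same strategy as the paper's: triangle inequality, then Lemma~\ref{JN} for the discretization error in $J$, plus a Lipschitz-type bound for the perturbation term. The only difference is the placement of the intermediate point: the paper splits through $\boldsymbol{H}\wtilde\Lambda_N$ (applying the $J-J_N$ estimate to $\wtilde\Lambda_N$ and then invoking \cite[Proposition~1]{jusa1} for $\norm{\boldsymbol{H}\wtilde\Lambda_N-\boldsymbol{H}\Lambda}\le T\epsilon$), whereas you split through $\boldsymbol{H}^N\Lambda$ (applying the $J-J_N$ estimate to $\Lambda$ and bounding $\boldsymbol{H}^N$ directly). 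Your variant is slightly more self-contained, since you derive the uniform bound $\norm{\boldsymbol{H}^N L}\le 2\norm{J}_Y\norm{L}_Y$ from first principles rather than citing the companion paper; otherwise the two arguments are interchangeable bookkeeping.
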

Here $C_3=C_3(T,\kappa_3)>0$ depends on $T$ and $\kappa_3$.
\begin{proof}
By (\ref{Koo}) and (\ref{Koo2}) we get
\begin{align}
\label{Kvirhe10b}
K^N\wtilde\Lambda_N -K\wtilde\Lambda_N=
R\wtilde\Lambda_NR(J-J_N)+(J_N-J)\wtilde\Lambda_N.
\end{align}
Using (\ref{operaattorit2}), we have $\norm{R}_Y\le 1$. Using \cite[Theorem 5]{jusa1}, we have $\norm{\Lambda}_Y \le M_1<\infty$. By (\ref{Kvirhe10b}) and Lemma \ref{JN} we have
\begin{align}
\label{Kvirhe10}
 \norm{\boldsymbol{H}^N\wtilde\Lambda_N(r) -\boldsymbol{H}\wtilde\Lambda_N (r) }_Y\le\norm{K^N\wtilde\Lambda_N -K\wtilde\Lambda_N}_{Y}\le
2(M_1+1)CN^{-\frac{1}{2}}.
\end{align}
By \cite[Proposition 1]{jusa1} we have
\begin{align}
\label{Kvirhepapru}
 \norm{\boldsymbol{H}\wtilde\Lambda_N -\boldsymbol{H}\Lambda}_{C([0,T],Y)}\le
T\epsilon.
\end{align}
By (\ref{Kvirhe10}) and (\ref{Kvirhepapru}), when $\epsilon\in (0,1)$ and $N\ge\kappa_3\epsilon^{-4}$ we have 
\begin{align}
\label{tohtorisaksala}
 \norm{\boldsymbol{H}^N\wtilde\Lambda_N -\boldsymbol{H}\Lambda }_{C([0,T],Y)}\le 2(M_1+1)C\Big(\frac{1}{\kappa_2}\Big)^{\frac{1}{2}}\epsilon^{2}
 +T\epsilon\le C_3\epsilon.
\end{align}
\end{proof}

Let $P_r$ be as in (\ref{operaattorit3}), $b$ be as in (\ref{innerproduct}), and $P^N$ be as in (\ref{pcsfjoukko2}). We define 
\begin{align}
\label{innerproduct2}
& \boldsymbol S^N: C([0,T],Y) \to C([0,T]),
\quad \boldsymbol S^NL(r) =\bra P^N L(r)P_r b,b\cet _{L^2(0,2T)}.
\end{align}
\begin{lemma}
\label{SN}
Let $L\in C([0,T],Y)$ and $\boldsymbol{S}$ be as defined in (\ref{innerproduct}). Then
\begin{align*}
\norm{\boldsymbol S^NL-\boldsymbol SL}_{C([0,T])}
\le \frac{T^3}{6N}\norm{L}_{C([0,T],Y)}.
\end{align*} 
\end{lemma}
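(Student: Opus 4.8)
The plan is to estimate the difference pointwise in $r\in[0,T]$, reducing everything to the elementary error bound for the projection $P^N$ applied to the fixed smooth function $P_r b$. First I would write, for fixed $r$,
\begin{align*}
\boldsymbol S^N L(r)-\boldsymbol S L(r)
=\bra P^N L(r)P_r b,b\cet_{L^2}-\bra L(r)P_r b,b\cet_{L^2}
=\bra L(r)P_r b,(P^N-I)b\cet_{L^2},
\end{align*}
where in the last step I used that $P^N$ is self-adjoint and $\bra L(r)P_r b,P^N b\cet=\bra P^N L(r)P_r b,b\cet$. Then by Cauchy--Schwarz,
\begin{align*}
|\boldsymbol S^N L(r)-\boldsymbol S L(r)|
\le \norm{L(r)}_Y\,\norm{P_r b}_{L^2(0,2T)}\,\norm{(I-P^N)b}_{L^2(0,2T)}.
\end{align*}
Now $\norm{L(r)}_Y\le\norm{L}_{C([0,T],Y)}$, and since $b(t)=1_{(0,T)}(t)(T-t)$ and $P_r$ is multiplication by an indicator, $\norm{P_r b}_{L^2(0,2T)}\le\norm{b}_{L^2(0,2T)}=(\int_0^T(T-t)^2\,dt)^{1/2}=(T^3/3)^{1/2}$.

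The remaining ingredient is a sharp enough bound on $\norm{(I-P^N)b}_{L^2(0,2T)}$. Since $b$ is piecewise linear (affine on $(0,T)$, zero on $(T,2T)$), on each subinterval $[\tfrac{(n-1)T}{N},\tfrac{nT}{N})$ of length $T/N$ the projection $P^N b$ is the mean value of $b$, and the $L^2$ error on that interval against the mean is bounded using $|b'|\le 1$. Concretely, for an affine function of slope at most $1$ on an interval of length $\delta$, the $L^2$ distance to its mean is $\delta/(2\sqrt 3)$ times the square root of the interval length; summing the squares of these local errors over the $N$ subintervals covering $(0,T)$ gives $\norm{(I-P^N)b}_{L^2(0,2T)}\le \tfrac{1}{2\sqrt 3}\cdot\tfrac{T}{N}\cdot T^{1/2}$. (One must also check the single subinterval straddling $t=T$, where $b$ has a corner; on that interval $b$ is still Lipschitz with constant $1$, so the same local bound applies. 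On all subintervals contained in $(T,2T)$ the function $b$ is identically zero and contributes nothing.) Multiplying the three factors, $(T^3/3)^{1/2}\cdot\tfrac{1}{2\sqrt3}\tfrac{T}{N}T^{1/2}=\tfrac{T^3}{6N}$, which is exactly the claimed constant.

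The only mild subtlety — and the step I would be most careful with — is squeezing the constant down to precisely $T^3/6$ rather than something larger, which requires the mean-value (not a cruder $L^\infty$) estimate for $P^N$ on each cell together with the exact value $\int_0^T (T-t)^2\,dt=T^3/3$; everywhere else the argument is just self-adjointness of $P^N$ and Cauchy--Schwarz. No properties of $L$ beyond its $C([0,T],Y)$ norm are needed, and the supremum over $r$ is trivial since the bound is $r$-independent.
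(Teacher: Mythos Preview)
Your argument is correct and follows essentially the same route as the paper: move $P^N$ to the other slot by self-adjointness, apply Cauchy--Schwarz, and use the two elementary bounds $\norm{P_r b}_{L^2(0,2T)}\le (T^3/3)^{1/2}$ and $\norm{(I-P^N)b}_{L^2(0,2T)}\le (T^3/(12N^2))^{1/2}$. One small remark: because the partition points are $\tfrac{nT}{N}$, the point $t=T$ is a grid node, so no subinterval actually straddles the corner of $b$; your Lipschitz argument there is unnecessary (though harmless), and the bound $\norm{(I-P^N)b}_{L^2}^2=T^3/(12N^2)$ is in fact an equality.
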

\begin{proof} By using (\ref{innerproduct}), (\ref{innerproduct2}) with the self-adjointness of operator $P^N$ we get 
\begin{align*}
|\boldsymbol S^NL(r)-\boldsymbol SL(r)|\le \norm{L(r)}_Y  \norm{P_rb}_{L^2(0,2T)}\norm{P^Nb-b}_{L^2(0,2T)}.
\end{align*}
We have $\norm{P_rb}_{L^2(0,2T)}\le(\frac{T^3}{3})^{\frac{1}{2}}$ and $\norm{P^Nb-b}_{L^2(0,2T)}\le(\frac{T^3}{12N^2})^{\frac{1}{2}}.$
\end{proof}
Let $K$ be as in (\ref{Koo}) and $K^N$ be as in (\ref{Koo2}). Let $\boldsymbol{H}$ be as in (\ref{Hoo}) and $\boldsymbol{H}^N$ be as in (\ref{Hn}). 
Let $\boldsymbol{Z}_{\alpha}$ be as in (\ref{Zalfa}). For $r\in[0,T]$ and $\alpha>0$ we denote
\begin{align}
\label{elvisonkunkku}
& Z_{\alpha,r}=\boldsymbol{Z}_{\alpha}(\boldsymbol{H} \Lambda)(r),
 \\\nonumber &\wtilde Z_{\alpha,r}=\boldsymbol{Z}_{\alpha}(\boldsymbol{H}^N\wtilde\Lambda_N)(r).
\end{align} 
Let $\boldsymbol{S}$ be as in (\ref{innerproduct}). Using (\ref{elvisonkunkku}) we denote
\begin{align}
\label{sope2}
& s_\alpha = \boldsymbol S \circ \boldsymbol Z_\alpha \circ \boldsymbol H\Lambda,
 \\\nonumber &\wtilde s_\alpha = \boldsymbol S \circ \boldsymbol Z_\alpha \circ \boldsymbol{H}^N\wtilde\Lambda_N.
\end{align} 
Let $\boldsymbol S^N$ be as in (\ref{innerproduct2}). Using  (\ref{elvisonkunkku}) we denote
\begin{align}
\label{sope3}
\wtilde s^N_\alpha = \boldsymbol S^N \circ \boldsymbol Z_\alpha \circ \boldsymbol{H}^N\wtilde\Lambda_N,\quad \wtilde s^N_\alpha (r)
 =\bra P^N\wtilde Z_{\alpha,r} P_r b,b\cet _{L^2(0,2T)}.
\end{align} 
\begin{proposition}
\label{PSN}
Let $\epsilon\in (0,1)$ and $\kappa_4>0$. Let $\alpha=2\epsilon^{\frac{4}{9}}$ and $N\ge\kappa_4\epsilon^{-4}$. Let $s_\alpha$ be as defined in (\ref{sope2}) and $\wtilde s^N_\alpha$ be as defined in (\ref{sope3}). Let $\boldsymbol{H} \Lambda(r)\in Y$ be bounded and positive semidefinite for all $r\in[0,T]$. Assume that $\boldsymbol{H}^N\wtilde\Lambda_N\in \mathcal{B}_{C([0,T],Y)} (\boldsymbol{H} \Lambda,\epsilon)$, then we have
\begin{align*}
\norm{\wtilde s^N_\alpha-s_\alpha}_{C([0,T])}
\le C_4\epsilon^{\frac{1}{9}}.
\end{align*} 
Here $C_4=C_4(T,\kappa_3)>0$ depends on $T$ and $\kappa_3$.
\end{proposition}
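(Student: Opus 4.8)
The plan is to route the comparison through the intermediate quantity $\wtilde s_\alpha=\boldsymbol S\circ\boldsymbol Z_\alpha\circ\boldsymbol H^N\wtilde\Lambda_N$ already introduced in (\ref{sope2})---which differs from $\wtilde s^N_\alpha$ only in that $\boldsymbol S$ replaces $\boldsymbol S^N$---and to bound
\[
\norm{\wtilde s^N_\alpha-s_\alpha}_{C([0,T])}\le\norm{\wtilde s^N_\alpha-\wtilde s_\alpha}_{C([0,T])}+\norm{\wtilde s_\alpha-s_\alpha}_{C([0,T])}
\]
term by term. A preliminary observation I would record is the crude a priori bound $\norm{\boldsymbol Z_\alpha(L)}_{C([0,T],Y)}\le2/\alpha$ valid for every $L$: by (\ref{Zalfa}), $\boldsymbol Z_\alpha(L)(r)$ is either $0$ or equals $\eta_Y(L(r),\alpha)(L(r)+\alpha)^{-1}$ with $d(L(r),Y^+)<\alpha/2$, and in the latter case, writing $L(r)+\alpha=(L^+ +\alpha)(I+(L^+ +\alpha)^{-1}(L(r)-L^+))$ for a positive semidefinite $L^+$ with $\norm{L(r)-L^+}_Y<\alpha/2$ and summing a Neumann series gives $\norm{(L(r)+\alpha)^{-1}}_Y\le(\alpha-\alpha/2)^{-1}=2/\alpha$.

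For the first term I would apply Lemma \ref{SN} with $L=\boldsymbol Z_\alpha(\boldsymbol H^N\wtilde\Lambda_N)$ together with the bound just noted, so that $\norm{\wtilde s^N_\alpha-\wtilde s_\alpha}_{C([0,T])}\le\frac{T^3}{6N}\cdot\frac2\alpha=\frac{T^3}{3N\alpha}$; with $N\ge\kappa_4\epsilon^{-4}$ and $\alpha=2\epsilon^{4/9}$ this is at most $\frac{T^3}{6\kappa_4}\epsilon^{32/9}\le\frac{T^3}{6\kappa_4}\epsilon^{1/9}$ for $\epsilon<1$. Thus the discretization of $\boldsymbol S$ contributes only a lower-order error, and the whole burden falls on the second term.

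For the second term I would first use that $\boldsymbol S$ is linear with $|\boldsymbol S L(r)|\le\norm{L(r)}_Y\norm{P_rb}_{L^2}\norm b_{L^2}\le\frac{T^3}{3}\norm L_{C([0,T],Y)}$ (here $\norm{P_rb}_{L^2},\norm b_{L^2}\le(T^3/3)^{1/2}$), reducing matters to estimating $\norm{\boldsymbol Z_\alpha(\boldsymbol H^N\wtilde\Lambda_N)-\boldsymbol Z_\alpha(\boldsymbol H\Lambda)}_{C([0,T],Y)}$. Here the hypotheses enter: since $\boldsymbol H\Lambda(r)$ is positive semidefinite, $d(\boldsymbol H\Lambda(r),Y^+)=0$, and since $\norm{\boldsymbol H^N\wtilde\Lambda_N-\boldsymbol H\Lambda}_{C([0,T],Y)}\le\epsilon$ one has $d(\boldsymbol H^N\wtilde\Lambda_N(r),Y^+)\le\epsilon$; provided $\epsilon\le\epsilon_*:=2^{-9/5}$, we have $\epsilon\le\alpha/4$, so the cutoff $\eta_Y$ equals $1$ at both $\boldsymbol H\Lambda(r)$ and $\boldsymbol H^N\wtilde\Lambda_N(r)$ and $\boldsymbol Z_\alpha$ reduces there to the genuine resolvent. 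Then, with $L_1=\boldsymbol H^N\wtilde\Lambda_N$ and $L_2=\boldsymbol H\Lambda$, the resolvent identity
\[
(L_1(r)+\alpha)^{-1}-(L_2(r)+\alpha)^{-1}=(L_2(r)+\alpha)^{-1}\big(L_2(r)-L_1(r)\big)(L_1(r)+\alpha)^{-1},
\]
combined with $\norm{(L_2(r)+\alpha)^{-1}}_Y\le1/\alpha$ (positivity of $L_2(r)$) and the crude bound $\norm{(L_1(r)+\alpha)^{-1}}_Y\le2/\alpha$, gives a pointwise bound $2\epsilon/\alpha^2$; since $\alpha^2=4\epsilon^{8/9}$ this equals $\frac12\epsilon^{1/9}$, whence $\norm{\wtilde s_\alpha-s_\alpha}_{C([0,T])}\le\frac{T^3}{6}\epsilon^{1/9}$.

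Adding the two estimates yields the proposition with $C_4=\frac{T^3}{6}(1+\kappa_4^{-1})$ for $\epsilon\in(0,\epsilon_*]$; for the remaining range $\epsilon\in(\epsilon_*,1)$ only the crude bound is needed, since then $|\wtilde s^N_\alpha(r)|,|s_\alpha(r)|\le\frac23T^3\epsilon^{-4/9}$ stays bounded while $\epsilon^{1/9}\ge\epsilon_*^{1/9}$ stays bounded below, so the inequality survives after enlarging $C_4$. The main obstacle I anticipate is precisely the handling of $\boldsymbol Z_\alpha$ in the second term: one has to observe that the a priori positivity of $\boldsymbol H\Lambda(r)$ together with the smallness of $\epsilon$ relative to $\alpha$ switches the cutoff off, turning $\boldsymbol Z_\alpha$ into a resolvent, and then accept the $\alpha^{-2}$ blow-up of the resolvent perturbation---balanced against $\epsilon$ by the choice $\alpha\sim\epsilon^{4/9}$---which is exactly what degrades the rate from $\epsilon$ to $\epsilon^{1/9}$.
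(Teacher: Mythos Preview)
Your proof is correct and follows essentially the same route as the paper's: the same triangle-inequality split through $\wtilde s_\alpha$, the same use of Lemma~\ref{SN} for the first term, and the same $\boldsymbol S$--$\boldsymbol Z_\alpha$ factorization for the second. The only difference is that where the paper cites \cite[Proposition~2 and Eq.~(32)]{jusa1} for the perturbation bound $\norm{\boldsymbol Z_\alpha(\boldsymbol H^N\wtilde\Lambda_N)-\boldsymbol Z_\alpha(\boldsymbol H\Lambda)}\le\tfrac12\epsilon^{1/9}$ and the resolvent bound $\norm{\boldsymbol Z_\alpha(\boldsymbol H\Lambda(r))}\le\alpha^{-1}$, you re-derive these directly via the cutoff analysis and resolvent identity---which forces your extra (but harmless) case split at $\epsilon_*=2^{-9/5}$.
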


\begin{proof}
We have 
\begin{align}
\label{soinintimppa}
\norm{\wtilde s^N_\alpha-s_\alpha}_{C([0,T])}
\le\norm{\wtilde s^N_\alpha-\wtilde s_\alpha}_{C([0,T])}+\norm{\wtilde s_\alpha-s_\alpha}_{C([0,T])} .
\end{align} 
Let $\boldsymbol{S}$ be as defined in (\ref{innerproduct}). We have $\norm{\boldsymbol{S}}_{C([0,T],Y)\to C([0,T])}\le \frac{T^3}{3}$, see \cite[Proposition 2]{jusa1}. Having  $\boldsymbol{H}^N\wtilde\Lambda_N\in \mathcal{B}_{C([0,T],Y)} (\boldsymbol{H} \Lambda,\epsilon)$ we get $\norm{\boldsymbol{Z}_{\alpha}(\boldsymbol{H}^N\wtilde\Lambda_N) -\boldsymbol{Z}_{\alpha}(\boldsymbol{H} \Lambda) }_{C([0,T],Y)}\le\frac{1}{2}\epsilon^{\frac{1}{9}}$, see \cite[Proposition 2]{jusa1}. Using (\ref{sope2}), for the second part of the sum in the right-hand side we get
\begin{align}
\label{rape}
\norm{\wtilde s_\alpha-s_\alpha}_{C([0,T])}
\le\norm{\boldsymbol{S}}_\Omega\norm{\boldsymbol{Z}_{\alpha}(\boldsymbol{H}^N\wtilde\Lambda_N) -\boldsymbol{Z}_{\alpha}(\boldsymbol{H} \Lambda) }_{C([0,T],Y)}\le\frac{T^3}{3}\frac{1}{2}\epsilon^{\frac{1}{9}} ,
\end{align} 
where we denote $\Omega=C([0,T],Y)\to C([0,T])$.
Using (\ref{sope2}) and (\ref{sope3}) with Lemma \ref{SN}, for the first part of the sum in the right-hand side we get
\begin{align}
\label{rape2}
\norm{\wtilde s^N_\alpha-\wtilde s_\alpha}_{C([0,T])}
\le\frac{T^3}{6N}\norm{\boldsymbol{Z}_{\alpha}(\boldsymbol{H}^N\wtilde\Lambda_N)}_{C([0,T],Y)}.
\end{align} 
We have 
\begin{align}
\label{rape3}
\norm{\boldsymbol{Z}_{\alpha}(\boldsymbol{H}^N\wtilde\Lambda_N)}_\Omega
\le\norm{\boldsymbol{Z}_{\alpha}(\boldsymbol{H}^N\wtilde\Lambda_N)-\boldsymbol{Z}_{\alpha}(\boldsymbol{H} \Lambda) }_\Omega+\norm{\boldsymbol{Z}_{\alpha}(\boldsymbol{H} \Lambda) }_\Omega,
\end{align} 
where we denote $\Omega=C([0,T],Y)$. Using \cite[Eq. (32)]{jusa1} we have
\begin{align}
\label{rape4}
\norm{\boldsymbol{Z}_{\alpha}(\boldsymbol{H} \Lambda (r)) }_Y\le\alpha^{-1}.
\end{align} 
Having $\epsilon\in (0,1)$, $N\ge\kappa_4\epsilon^{-4}$ and $\alpha=2\epsilon^{\frac{4}{9}}$ with use of (\ref{rape2}), (\ref{rape3}), and (\ref{rape4}) we get
\begin{align}
\label{rape5}
\norm{\wtilde s^N_\alpha-\wtilde s_\alpha}_{C([0,T])}
\le\frac{T^3}{6}\frac{\epsilon^4}{\kappa_4}\Big(\frac{1}{2}\epsilon^{\frac{1}{9}}+\frac{1}{2}\epsilon^{-\frac{4}{9}}\Big)\le \frac{T^3}{3\kappa_4}\epsilon^{\frac{32}{9}}.
\end{align} 
By using (\ref{soinintimppa}) (\ref{rape}) and (\ref{rape5}) we get the estimate.
\end{proof}
\begin{proof}[Proof of Theorem \ref{kaiken_teoria2}]
\label{hauki}
\hfill \break
Let us consider the measurement ($\epsilon_1, N_1, \wtilde\Lambda_{N_1}$).
There is $N_1\ge\epsilon_1^{-4}$ and $\wtilde\Lambda_{N_1}$, for which $\norm{\wtilde\Lambda_{N_1}-\Lambda_{N_1}}_{Y}\le \epsilon_1$, and Proposition  \ref{DLb}  gives us

\begin{align*}
 \norm{\wtilde\Lambda_{N_1}-\Lambda }_{{Y}}\le C_2\epsilon_1.
\end{align*}

  Let $\epsilon_1\in (0,C_2^{-1})$ and $\epsilon=C_2\epsilon_1$. We choose $\kappa_3=C_2^4$ and thus $ N_1\ge\epsilon_1^{-4}=C_2^4\epsilon^{-4}$. Having $\wtilde\Lambda_{N_1}\in \mathcal{B}_{Y} (\Lambda,\epsilon)$, Proposition \ref{HN} gives us 
\begin{align}
\label{patelaine2c}
 \norm{\boldsymbol{H}^{N_1}\wtilde\Lambda_{N_1} -\boldsymbol{H}\Lambda}_{C([0,T],Y)}\le C_3C_2\epsilon_1.
\end{align}
Let  $\epsilon_1\in(0,C_2^{-1}C_3^{-1})$ and $\epsilon=C_2C_3\epsilon_1$.  We choose $\kappa_4=C_2^4C_3^4$ and thus $ N_1\ge\epsilon_1^{-4}=C_2^4C_3^4\epsilon^{-4}$. Let $\alpha_1=2C_2^{\frac{4}{9}}C_3^{\frac{4}{9}}\epsilon_1^{\frac{4}{9}}$. Having (\ref{patelaine2c}), Proposition \ref{PSN} gives us
\begin{align}
\label{timojutilac}
\norm{\wtilde s^{N_1}_\alpha-s_{\alpha}}_{C([0,T])} \le
C_4C_3^{\frac{1}{9}}C_2^{\frac{1}{9}}\epsilon_1^{\frac{1}{9}}.
\end{align}
Let $V$ be as defined as in (\ref{operaattorit4}) and $D_{h}$ be as defined as in (\ref{operaattori6}). Let $\epsilon_1\in(0,\hat\epsilon)$, where $\hat\epsilon$ as defined in  \cite[Proposition 4]{jusa1}. Let $\kappa_5 >0$ and $h=\kappa_5\epsilon_1^{\frac{1}{18}}$. Having (\ref{timojutilac}) and using \cite[Proposition 4]{jusa1} we get 
\begin{align} 
\label{timojutila2c}
\norm{D_{h}(\wtilde s^{N_1}_\alpha)-\p_r V}_{L^\infty (0,T)}
\le C_5 C_4^{\frac{1}{2}} C_3^{\frac{1}{18}}C_2^{\frac{1}{18}}\epsilon_1^{\frac{1}{18}},
\end{align} 
where $C_5=C_5(\kappa_5)$. Note that we use the parameter $\kappa_5$ to control the size of discretization in (\ref{operaattori6}).
Let $v$ be as defined in (\ref{operaattorit5}) and $W$ be as defined in (\ref{arska}). Let us denote $\wtilde{w}^{N_1}_{\alpha}=W(D_{h}(\wtilde s^{N_1}_\alpha))$. 
Having (\ref{timojutila2c}) and using \cite[Proposition 5]{jusa1} we get 
\begin{align}
\label{sebastianahoc}
\norm{ \wtilde{w}^{N_1}_{\alpha}-v}_{L^\infty (M)} 
\le C_6 C_5 C_4^{\frac{1}{2}} C_3^{\frac{1}{18}} C_2^{\frac{1}{18}}\epsilon_1^{\frac{1}{18}}.
\end{align}
Let $\nu =\kappa_6\epsilon_1^\frac{1}{54}$ and $c\in\mathcal V^3$. Let $\Phi$ be as in (\ref{hiiri}) and $\eta_\nu$ be as in (\ref{molliolli}). Let us denote $\wtilde c^{N_1}_{\alpha}=\eta_\nu \ast\Phi(\wtilde w^{N_1}_\alpha)$. Having (\ref{sebastianahoc}) and using  \cite[Proposition 6]{jusa1} we get 
\begin{align}
\label{kenguru}
\norm{\wtilde c^{N_1}_{\alpha} -c}_{C^2(M)} 
\le C_7C_6^{\frac{1}{3}} C_5^{\frac{1}{3}} C_4^{\frac{1}{6}} C_3^{\frac{1}{54}} C_2^{\frac{1}{54}}\epsilon_1^{\frac{1}{54}},
\end{align}
where $C_7=C_7(\kappa_6)$. Note that we use the parameter $\kappa_6$ to control the support of $\eta_\nu$ in (\ref{molliolli}).
We define
\begin{align}
\label{epsilooninolla}
 \wtilde\epsilon_1=min\Big\{C_2^{-1},C_2^{-1}C_3^{-1},\hat\epsilon\Big\}.
\end{align} 

By using (\ref{Zalfa}), (\ref{operaattori6}), (\ref{arska}), (\ref{hiiri}), (\ref{smootapp}), (\ref{Hn}), and (\ref{innerproduct2}) we define 
\begin{align}
\label{Rstategia2}
& \mathcal R^{(1)}_{N_1,\alpha_1}:Y\to \ligc Z,
\\\nonumber&\mathcal \mathcal R^{(1)}_{N_1,\alpha_1}=\Gamma_\nu\circ\Phi\circ W\circ D_{h}\circ\boldsymbol{S}^{N_1}\circ \boldsymbol{Z}_{\alpha_1}\circ\boldsymbol{H}^{N_1},
\end{align}
and have the estimate
\begin{align*}
\norm{\mathcal R^{(1)}_{N_1,\alpha_1}(\wtilde\Lambda_{N_1})-c}_{\ligc Z} \le
a_1\epsilon_1^{\frac{1}{54}},
\end{align*}
when $\epsilon_1 \in (0,\wtilde\epsilon_1)$ and $a_1=C_7C_6^{\frac{1}{3}} C_5^{\frac{1}{3}} C_4^{\frac{1}{6}} C_3^{\frac{1}{54}} C_2^{\frac{1}{54}}$. 
\end{proof}
\begin{proof}[Proof of Theorem \ref{kaiken_teoria2b}]
\label{ahven}
\hfill \break
Let ${\phi_{j,N_0}}$ be as in (\ref{kantafunktio}), where $j\in\{1,2,3,...,2N_0\}$. 
Let $r\in [0,2T)$ and we define
\begin{align}
\label{translaatio}
\mathcal T_r: L^2(0,2T) \to L^2(0,2T), 
\quad  T_rf(t) = 
\begin{cases}
\,\quad 0, & t \in (0,r), \\
f(t-r), & t\in [r,2T).
\end{cases}
\end{align}

Using (\ref{mittauspoika}) and (\ref{translaatio}) we define
\begin{align}
\label{mittausaikapoika}
 \wtilde\Lambda_{\mathcal P}\phi_{j,N_0}=\Big(\frac{N_0}{T}\Big)^{\frac{1}{2}}\Big(\mathcal T_{\frac{(j-1)T}{N_0}} -\mathcal T_{\frac{jT}{N_0}}\Big)P^{N_0} \wtilde m_{N,\epsilon_0}.
\end{align}
As $\big\{\phi_{1,N_0},...,\phi_{2N_0,N_0}\big\}$ span $\mathcal P^{N_0}$ this defines a linear map
\begin{align}
\label{mittahulg}
 \wtilde\Lambda_{\mathcal P}:\mathcal P^{N_0}\to \mathcal P^{N_0}.
\end{align}
Using (\ref{mittausaikapoika}) and (\ref{mittahulg}) we define a perturbed and  discretizatized Neumann-to-Dirichlet operator

\begin{align}
\label{mittamulgi}
&\wtilde\Lambda_{N_0}:L^2(0,2T)\to L^2(0,2T),
\\\nonumber& \wtilde\Lambda_{N_0} |_{\mathcal P^{N_0}}= \wtilde\Lambda_{\mathcal P},
\\\nonumber&\wtilde\Lambda_{N_0} |_{(\mathcal P^{N_0})^\perp}=0,
\end{align}
where $\mathcal P^{N_0}\oplus (\mathcal P^{N_0})^\perp = L^2(0,2T)$. Using (\ref{mittauspoika}), (\ref{mittausaikapoika}), (\ref{mittahulg}), and (\ref{mittamulgi}) we define
\begin{align}
\label{erkkipoika}
E_{N_0} : \mathcal P^N \to Y,\quad
 E_{N_0} \wtilde m_{N,\epsilon_0}=\wtilde\Lambda_{N_0}.
\end{align}
Let $\epsilon_0>0$. Let $l_0(\epsilon_0)$ be as in (\ref{porraspossu}) and $N_0(\epsilon_0)$ be as in (\ref{porraspossu2}). Let $N=2^l\ge N_0$, where $l\in\zeta$. Let $\Lambda_{N_0} $ be as in (\ref{pallokala}) and $\wtilde\Lambda_{N_0}$ be as in (\ref{mittamulgi}). Let $P^N$ be as in (\ref{pcsfjoukko2}) and $\phi_{1,N}$ be as in (\ref{kantafunktio}). Let $\wtilde m_{N,\epsilon_0}$ be as in (\ref{mittauspoika}). Assume that $\wtilde m_{N,\epsilon_0}\in \mathcal{B}_{L^2(0,2T)} (P^N\Lambda H,\epsilon_0)$ and $\norm{n_{N,\epsilon_0} }_{L^2(0,2T)}\le\epsilon_0$. Then 
\begin{align}
\label{kumiukko}
 \norm{\wtilde\Lambda_{N_0}-\Lambda_{N_0} }_{{Y}}\le C_{P_1}\epsilon_0^{\frac{1}{5}}.
\end{align}

Here $C_{P_1}=C_{P_1}(T)>0$ depends on $T$.
Having (\ref{kantafunktio}), (\ref{porras}), and (\ref{translaatio}) we get 
 \begin{align}
\label{mittauspoikanolla}
\Lambda_{N_0}\phi_{1,{N_0}}=\Big(\frac{N_0}{T}\Big)^{\frac{1}{2}}P^{N_0}\Lambda P^{N_0}(I-\mathcal T_{\frac{T}{N_0}})H.
\end{align}
As $N=2^l\ge N_0=2^{l_0}$ we have $P^{N_0}P^N=P^{N_0}$. Using $\mathcal T_r \Lambda=\Lambda\mathcal T_r$ and $P^{N_0}\mathcal T_{\frac{T}{N_0}}=\mathcal T_{\frac{T}{N_0}}P^{N_0}$ with (\ref{mittauspoika}), (\ref{mittausaikapoika}), and (\ref{mittauspoikanolla}) we get 
 \begin{align}
\label{mittauspoikan}
 \wtilde\Lambda_{N_0}\phi_{1,{N_0}}-\Lambda_{N_0}\phi_{1,{N_0}}=\Big(\frac{N_0}{T}\Big)^{\frac{1}{2}}(I-\mathcal T_{\frac{T}{N_0}})n_{N,\epsilon_0}.
\end{align}
Using $N_0\le\epsilon_0^{-\frac{4}{5}}$, $\norm{\mathcal T_{\frac{T}{N_0}} }_{L^2(0,2T)}\le 1$ and $\norm{n_{N,\epsilon_0} }_{L^2(0,2T)}\le\epsilon_0$ we get 
\begin{align}
\label{sorsa}
 \norm{\wtilde\Lambda_{N_0}\phi_{1,{N_0}}-\Lambda_{N_0}\phi_{1,{N_0}} }_{L^2(0,2T)}\le 2T^{-\frac{1}{2}}\epsilon_0^{\frac{3}{5}}.
\end{align}
Let $f\in L^2(0,2T)$. By (\ref{pcsfjoukko2}) we get
\begin{align}
\label{virheporrasfunktiolle2}
  \norm{\wtilde\Lambda_{N_0} f-\Lambda_{N_0}  f}_{L^2(0,2T)}\le \sum_{j=1}^{2N_0} \bra f,\phi_{j,N_0}\cet
_{L^2(0,2T)}\norm{P^{N_0}\wtilde\Lambda\phi_{j,{N_0}}-P^N\Lambda\phi_{j,{N_0}}}_{L^2(0,2T)}.
\end{align}
Using (\ref{kantafunktio}), (\ref{mittausaikapoika}), (\ref{mittauspoikanolla}), and (\ref{mittauspoikan}) we have
\begin{align}
\label{virheporrasfunktiolle2cc}
 \norm{\wtilde\Lambda_{N_0} f-\Lambda_{N_0}  f}_{L^2(0,2T)}\le\norm{P^{N_0}\wtilde\Lambda\phi_{1,{N_0}}-P^{N_0}\Lambda\phi_{1,{N_0}}}_{L^2(0,2T)} \sum_{j=1}^{2{N_0}} \bra f,\phi_{j,{N_0}}\cet
_{L^2(0,2T)}.
\end{align}
Thus (\ref{sorsa}) and (\ref{virheporrasfunktiolle2cc}) gives us
\begin{align}
\label{virheporrasfunktiolle2ccg}
\norm{\wtilde\Lambda_{N_0} f-\Lambda_{N_0}  f}_{L^2(0,2T)}\le (2N_0)^{\frac{1}{2}}\norm{f}_{L^2(0,2T)}2T^{-\frac{1}{2}}\epsilon_0^{\frac{3}{5}}.
\end{align}
We have $N_0\le\epsilon_0^{-\frac{4}{5}}\le 2N_0$ and this proves (\ref{kumiukko}).

We define 
\begin{align}
\label{anttiaffauros}
 \epsilon_1=C_{P_1}\epsilon_0^{\frac{1}{5}}.
\end{align}
Using $N_0\le\epsilon_0^{-\frac{4}{5}}\le 2N_0$ and (\ref{anttiaffauros}) we get
\begin{align}
\label{anttiaffauros2}
 N_0\ge 2^{-1}C_{P_1}^{4}\epsilon_1^{-4}.
\end{align}
By (\ref{anttiaffauros2}), $\wtilde\Lambda_{N_0}\in \mathcal{B}_Y (\Lambda_{N_0},\epsilon_1)$, and Theorem \ref{kaiken_teoria2} we get 
\begin{align}
\label{katiska}
\norm{\wtilde c^{N_0}_{\alpha} -c}_{C^2(M)} 
\le C_7C_6^{\frac{1}{3}} C_5^{\frac{1}{3}} \wtilde C_4^{\frac{1}{6}} \wtilde C_3^{\frac{1}{54}} \wtilde C_2^{\frac{1}{54}}\epsilon_1^{\frac{1}{54}}.
\end{align}
Note that in the proof of Theorem \ref{kaiken_teoria2} we have assumed $N_1\ge\epsilon_1^{-4}$. After replacing this by (\ref{anttiaffauros2}), the proof is identical, only the constants $\wtilde C_2, \wtilde C_3, \wtilde C_4$ change---see (\ref{virheprojektiolle15}), (\ref{tohtorisaksala}), (\ref{rape4}), and (\ref{kenguru}). Using (\ref{anttiaffauros}) we get
\begin{align*}
\norm{\wtilde c^{N_0}_{\alpha} -c}_{C^2(M)} 
\le C_7C_6^{\frac{1}{3}} C_5^{\frac{1}{3}} \wtilde C_4^{\frac{1}{6}} \wtilde C_3^{\frac{1}{54}}\wtilde C_2^{\frac{1}{54}}C_{P_1}^{\frac{1}{54}}\epsilon_0^{\frac{1}{270}}.
\end{align*}

We define
\begin{align}
\label{epsilooninolla}
 \wtilde\epsilon_0=min\Big\{C_{P_1}^{4}\wtilde C_2^{-5},C_{P_1}^{4}\wtilde C_2^{-5}C_3^{-5},\hat\epsilon\Big\},
\end{align} 
where $\hat\epsilon$ can be specified by using \cite[Proposition 4]{jusa1}. 
By using (\ref{Zalfa}), (\ref{operaattori6}), (\ref{arska}), (\ref{hiiri}), (\ref{smootapp}), (\ref{erkkipoika}), (\ref{Hn}), and (\ref{innerproduct2}), we define 
\begin{align}
\label{Rstategia2b}
&\mathcal R^{(0)}_{N_0,\alpha_0}:\mathcal P^{N}\to \ligc Z,
\\\nonumber&\mathcal R^{(0)}_{N_0,\alpha_0}=\Gamma_\nu\circ\Phi\circ W\circ D_{h}\circ\boldsymbol{S}^{N_0}\circ \boldsymbol{Z}_{\alpha}\circ\boldsymbol{H}^{N_0}\circ E^{N_0},
\end{align}
and have the estimate
\begin{align*}
\norm{\mathcal R^{(0)}_{N_0,\alpha_0}(\wtilde m_{N,\epsilon_0})-c}_{\ligc Z} \le
a_0\epsilon_0^{\frac{1}{270}},
\end{align*}
when $\epsilon_0 \in (0,\wtilde\epsilon_0)$ and $a_0=C_7C_6^{\frac{1}{3}} C_5^{\frac{1}{3}} \wtilde C_4^{\frac{1}{6}} \wtilde C_3^{\frac{1}{54}}\wtilde C_2^{\frac{1}{54}}C_{P_1}^{\frac{1}{54}}$. 
\end{proof}

\section{Numerical examples}
\label{sec_computations}

In this section we describe a computational implementation of the regularization strategy in Theorem \ref{kaiken_teoria2}. We will also compare this with a heuristic variant of MDP---see (\ref{morozov}).
We will begin by describing how the data---that is, the noisy discretized Neumann-to-Dirichlet map $\widetilde \Lambda_{N_1}$---is simulated.

\subsection{The simulation of measurement data}

We choose $T = 0.6$ in all the simulations.
We use $k$-Wave \cite{K-Wave} to solve the
boundary value problem (\ref{dartwader}) with 
$f =\phi_{1,N_1} \in \mathcal P_{N_1}$, 
where $N_1 = 2^{10}$, and denote the solution by $u^{(sim)}$.
Recall that 
\begin{align}
\label{ekakantafunktio}
 \phi_{1,N_1}(t) = h^{-\frac{1}{2}}1_{[0,h)}(t), \quad t \in [0,2T),
\end{align} 
where $h = T / N_1$.
In order to simulate $u^{(sim)}$ for $2T$ time units, a fine discretization needs to be used, and we choose a regular mesh with $2^{13}$ spatial and $N_2 = 2^{15}$ temporal cells. 
Then we define the simulated Neumann-to-Dirichlet map, acting on the first basis function,
\begin{align}
\label{possuli}
\Lambda^{(sim)} \phi_{1,N_1}(t) = \sum_{j=1}^{2N_1} u^{(sim)}(t_j, 0) \phi_{j,N_2}(t), \quad t \in [0,2T),
\end{align}
where $t_j=\frac{(j-1)2T}{N_2}$, $j\in\{1,2,...,N_2\}$, are the temporal grid points.
The output of k-Wave is, of course, only an approximation of $u^{(sim)}$ but we do not analyze this simulation error and use the same notation for both $u^{(sim)}$ and its approximation. 

Our primary object of interest is the following discretized version of the Neumann-to-Dirichlet map 
\begin{align}
\label{lamdad}
 \Lambda_{N_1}^{(d)} : \mathcal P^{N_1} \to \mathcal P^{N_1},\quad\Lambda_{N_1}^{(d)}f=\sum_{j=1}^{2N_1}\sum_{k=1}^{j}f_k\Lambda_{j-k+1}\phi_{j,N_1}, 
\end{align}
where $\Lambda_j=\bra \Lambda^{(sim)} \phi_{1,N_1},\phi_{j,N_1}\cet_{L^2(0,2T)}$ and $f_k$, $k=1,2,\dots,2N_1$,
are the coefficients of $f$ on the basis of $\mathcal P^{N_1}$.
Observe that $\Lambda_{N_1}^{(d)} \phi_{1,N_1}$ is 
simply the projection of $\Lambda^{(sim)} \phi_{1,N_1}$ on $\mathcal P^{N_1}$, and that $\Lambda_{N_1}^{(d)}f$ is then defined by using the fact that the wave equation (\ref{dartwader}) is invariant with respect to translations in time.

We will now describe how the noise is simulated. 
Consider 
\begin{equation}
\label{the_noise}
n=(n_1,n_2,...,n_{2N_1})\in\reali^{2N_1},
\end{equation}
where
$n_j\in\mathcal{N}(0,1)$, that is, $n_j$ is a normally distributed random variable with zero mean and unit variance.
We compute a realization of $n$ by using the randn function of MATLAB,
and use the same notation for $n$ and its realization.
Let $\epsilon_0^{(d)}>0$ and define,
analogously to (\ref{lamdad}),
a noisy, discretized version of the Neumann-to-Dirichlet map 
\begin{align}
\label{lamdadvir}
 \wtilde\Lambda_{N_1}^{(d)} : \mathcal P^{N_1} \to \mathcal P^{N_1},\qquad\wtilde\Lambda_{N_1}^{(d)} f=\sum_{j=1}^{2N_1}\sum_{k=1}^{j}f_k \wtilde\Lambda_{j-k+1}\phi_{j,N_1},
\end{align}
where $\wtilde\Lambda_j=\Lambda_j+\hat n_j$ and 
\begin{align}
\label{mittavirhe_mod}
\hat n_j= \frac{\epsilon_0^{(d)}}{\norm{n}_{l^2}}n_j, 
\quad j=1,2,\dots,2N_1.
\end{align} 
Following the formulation of Theorem \ref{kaiken_teoria2},
rather than using $\epsilon_0^{(d)}>0$, we prefer to 
parametrize the noise level in terms of 
\begin{align}
\label{laskennalvirhe}
\epsilon^{(d)}_1:= \norm{\wtilde \Lambda_{N_1}^{(d)}-\Lambda_{N_1}^{(d)} }_{\mathcal P^{N_1}}.
\end{align}
In what follows, we will consider 
the quantity $(\epsilon^{(d)}_1, \wtilde \Lambda_{N_1}^{(d)})$, 
a simulated analogue of the noisy measurements in Section \ref{virhesikapossu}.

\subsection{Implementation of the regularization strategy}

For the a priori bounds in (\ref{nopeudet}) we use values $C_0=0.01$, $C_1=10$, $L_0=0.01$, and $L_1=0.6$.
The crux of the regularization strategy $\mathcal R^{(1)}_{N_1,\alpha_1}$ is the computation of the inverse in (\ref{Zalfa}). 
When starting from the simulated measurement $(\epsilon^{(d)}_1, \wtilde \Lambda_{N_1}^{(d)})$, the analogue of (\ref{Zalfa}) is to solve $X_j$ in the equation
\begin{align}
\label{yhtalot}
(P_{r_j}K^{N_1} \wtilde \Lambda_{N_1}^{(d)} P_{r_j}+\alpha_1)X_j=P_{r_j}b.
\end{align}
Here $P_r$ is the projection in (\ref{operaattorit3}), and
we choose $r_j=jh$, $j = 1,2,\dots,N_1$.
The choice of the regularization parameter $\alpha_1$ is discussed in detail below.  

We use the restarted generalized minimal residual (GMRES) method to solve the system of linear equations (\ref{yhtalot}) and choose six as the maximum number of outer iterations and 10 as the number of inner iterations (restarts). We use the initial guess $f = 0$ and the tolerance of the method is set to 1e-12.

After this we simply follow the regularization strategy 
(\ref{Rstategia2}), that is, we get an approximation of $c$ 
by setting
\begin{align}
\label{the_rec}
\wtilde c^{N_1}_{\alpha_{1}}=\Gamma_\nu(\Phi(W(D_h(\wtilde s^N_{\alpha_1})))),
\end{align}
where $\wtilde s^{N_1}_{\alpha_1} (r_j)
 =\bra X_j,b\cet _{L^2(0,2T)}$, $j = 1,2,\dots,N_1$.
The scaling of $\nu$ is chosen as follows 
\begin{align*}
\nu=0.01(\epsilon_1^{(d)})^{\frac{1}{54}}.
\end{align*}
In the numerical computations the parameter $h$ was fixed to be $h=\frac{T}{N_1}$, that is, the discrete derivative $D_h$ was computed in the grid that is used in (\ref{ekakantafunktio}) to represent the basis functions $\phi_{1,N_1}$. Observe that this deviates from the theoretical choice $h=C\epsilon_1^{\frac{1}{18}}$ used in (\ref{Rstategia}). We will describe next how the regularization parameter $\alpha_1$ is chosen, and then we will study how the error $\wtilde c^{N_1}_{\alpha_{1}} - c$ behaves as function of $\epsilon^{(d)}_1$.

\subsection{Calibration of the regularization strategy}

Recall that in Theorem \ref{kaiken_teoria2} 
the choice of regularization parameter is of the form
$\alpha_1=C_{reg1}(\epsilon_1^{(d)})^p$, where $p={\frac{4}{9}}$.
In particular, the choice is explicit apart from the constant $C_{reg1}$. In this section we choose $C_{reg1}$ so that it gives a good reconstruction of a particular velocity function $c_c$---see Figure \ref{testprofile}. Then the same constant is used in all the subsequent computational examples. 

\begin{figure}
\centering
\includegraphics[scale=0.18]
{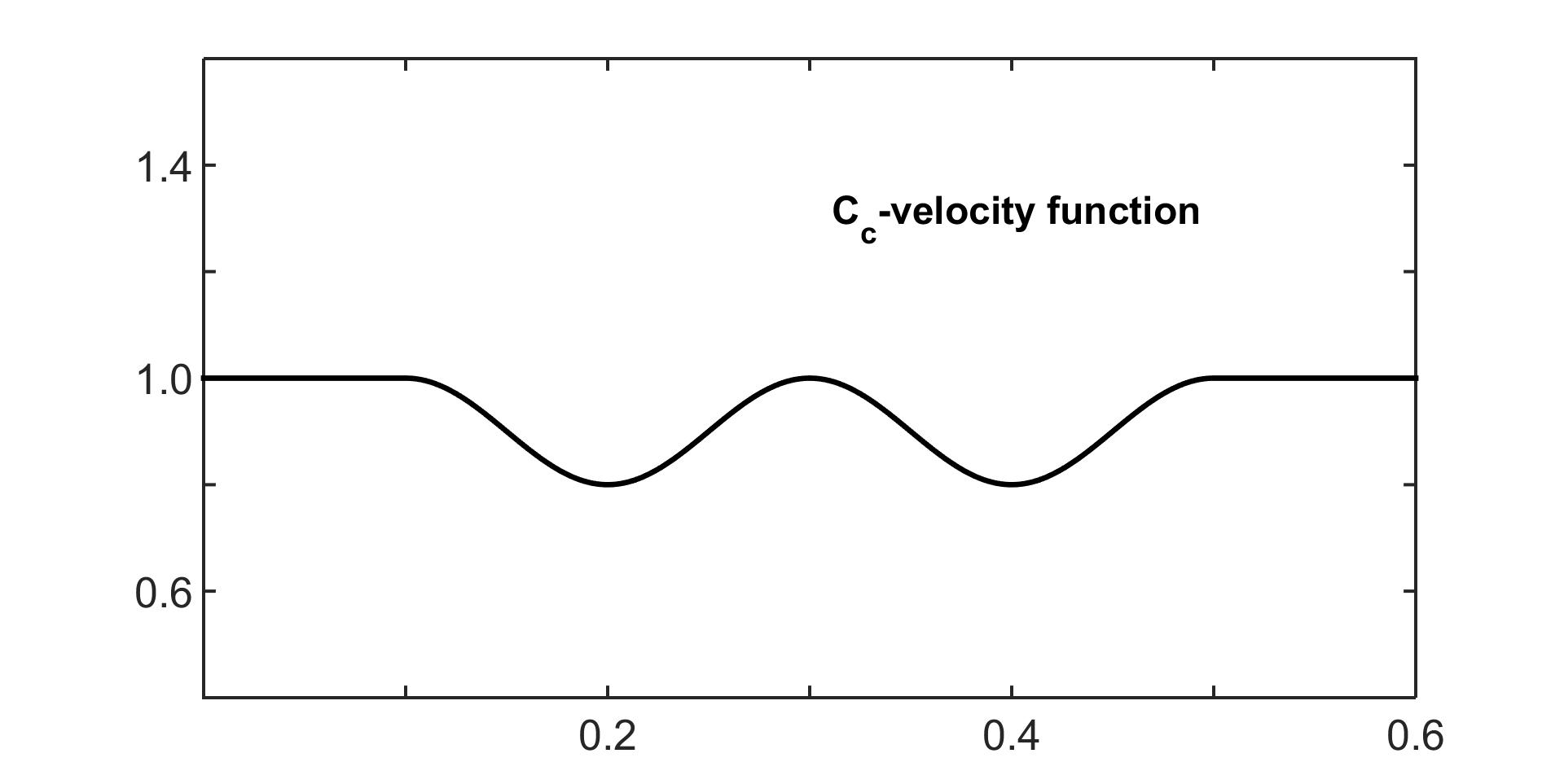}
\caption{The velocity function $c_c$
used in the calibration of the regularization strategy.
}   
\label{testprofile}
\end{figure}

In the regularization strategy we consider 10 values for measurement errors, as defined in (\ref{laskennalvirhe})
\begin{align}
\label{epsihommeli}
\epsilon^{(d)}_{1,k}\in\{k\cdot 10^{-2}|k=1,2,3,...,10\},
\end{align} 
and nine values for the multiplicative constant $C_{reg1}=10^{-j}$, $j=1,3,...,9$.
Then we consider the error in the reconstruction as a function of $j$,
\begin{align}
error(j)=\norm{\wtilde c^{N_1}_{\alpha_{j,k}} -c_c}_{L^2 (M)},
\end{align}
where for each error level, the reconstruction $\wtilde c^{N_1}_{\alpha_{j,k}}$ is computed by (\ref{the_rec}).
These computations are summarized in Figure \ref{choosing_C2}.
We see that the choice $j=4$, that is, 
\begin{align}
\label{the_alpha}
\alpha_1 = 10^{-4} (\epsilon_1^{(d)})^p, \quad p={\frac{4}{9}},
\end{align}
gives a good reconstruction on all the error levels. 
In what follows we will systematically use this choice. 

\begin{figure}
\centering
\includegraphics[scale=0.18]
{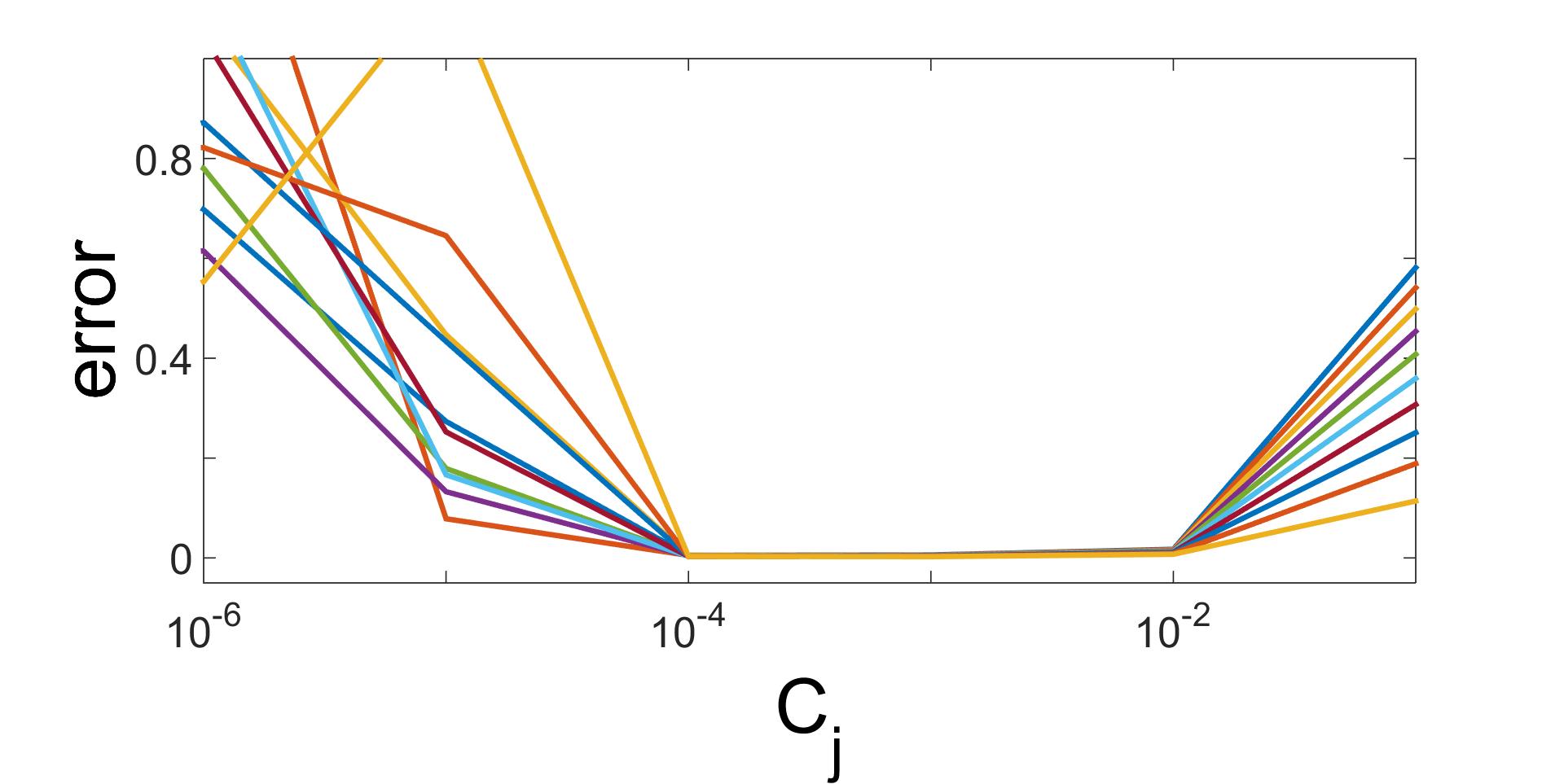}
\caption{
The reconstruction error as a function of the multiplicative constant $C_j = C_{reg1}$. Each curve corresponds to a noise level in (\ref{epsihommeli}).
As expected, the reconstruction error is monotonous as a function of the noise level: The highest line corresponds to $\epsilon_1^{(d)}=0.1$
and the lowest one to $\epsilon_1^{(d)}=0.01$. 
We also observe that the reconstruction error becomes more sensitive to the choice of $C_{reg1}$ as the noise level grows. 
}   
\label{choosing_C2}
\end{figure}

\subsection{Reconstruction results based on the analysis}
We will now consider the reconstruction (\ref{the_rec}), 
with the choice of regularization parameter (\ref{the_alpha}),
in two test cases. 
We begin with with a smooth velocity function $c_s$(see Figure \ref{estimatkunkku}), where reconstructions of two different noise levels are shown. 

 \begin{figure}
\centering
\includegraphics[scale=0.14]
{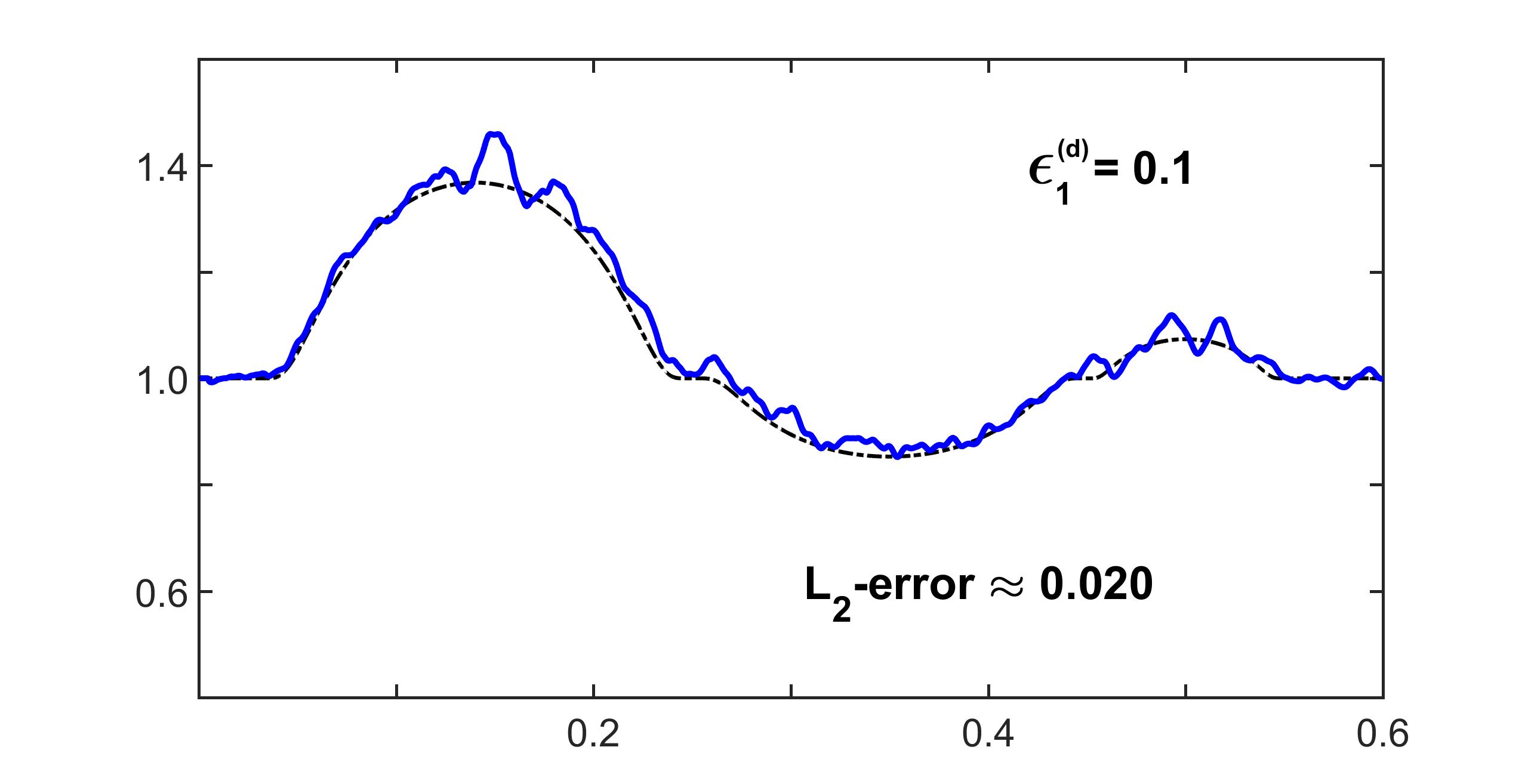}
\includegraphics[scale=0.14]
{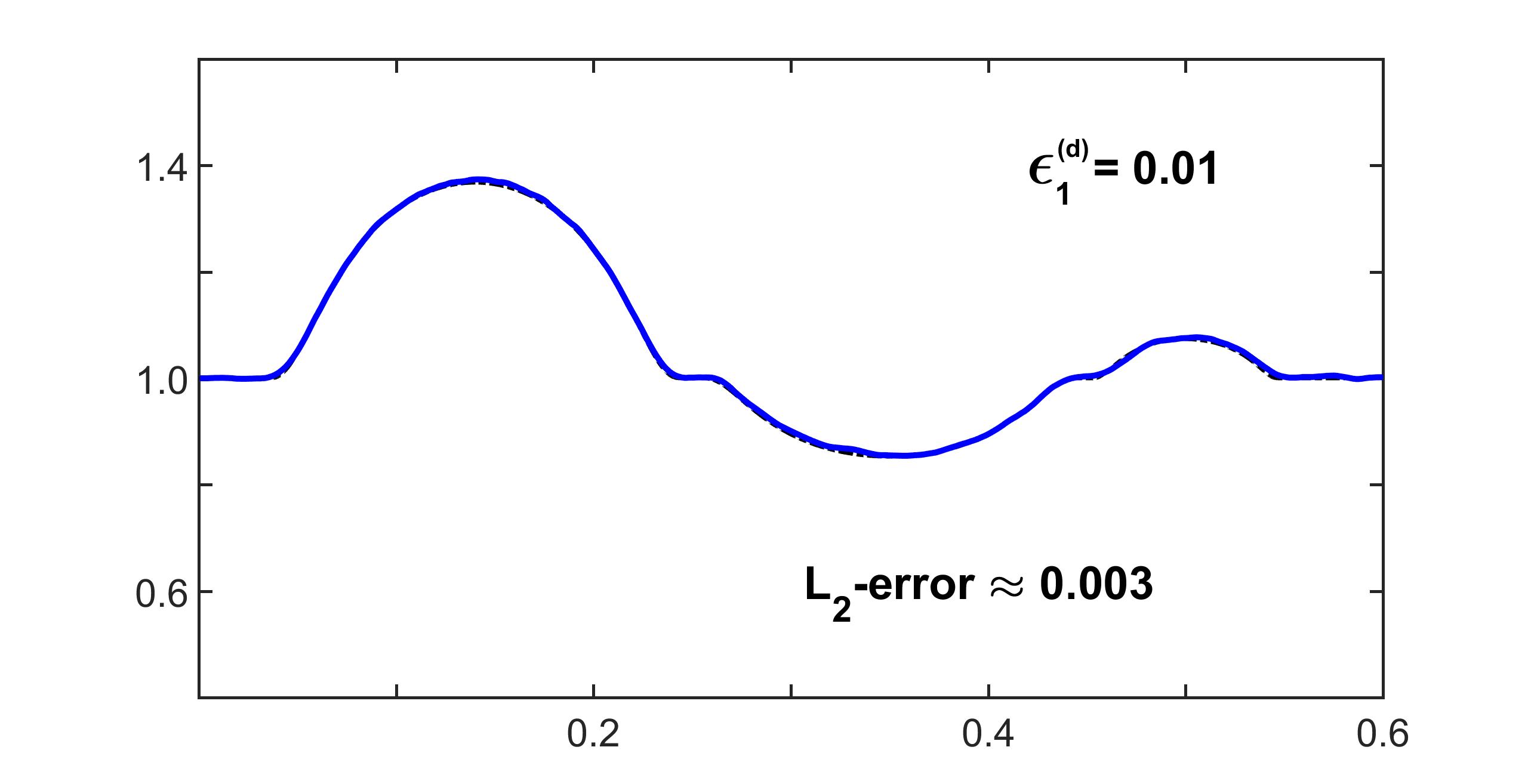}
\caption{
Two reconstructions (the solid blue lines) of a smooth velocity function $c_s$ (the dashed lines).
{\em Top:} Noise level $\epsilon^{(d)}_1= 
 0.1$. {\em Bottom:} Noise level $\epsilon^{(d)}_1= 
 0.01$.
 }
\label{estimatkunkku}
\end{figure}
To study the order of convergence of our reconstruction method, 
we consider 10 noise levels,
\begin{align}
\label{epsihommeli2}
\epsilon^{(d)}_{1}\in\{k\cdot 10^{-2}|k=1,2,3,...,10\},
\end{align} 
and simulate noisy measurements with five different realizations of the random vector
$n$ in (\ref{the_noise}) at each noise level.
The corresponding reconstruction errors 
$\norm{\wtilde c^{N_1}_{\alpha_{1}} -c_s}_{L^2 (M)}$
are summarized in Figure \ref{fourestimates}.
Computations suggest that the order of convergence is 0.40.
This is better than $\frac{1}{54}$ in Theorem (\ref{kaiken_teoria2}).  

\begin{figure}
\centering
\includegraphics[scale=0.18]
{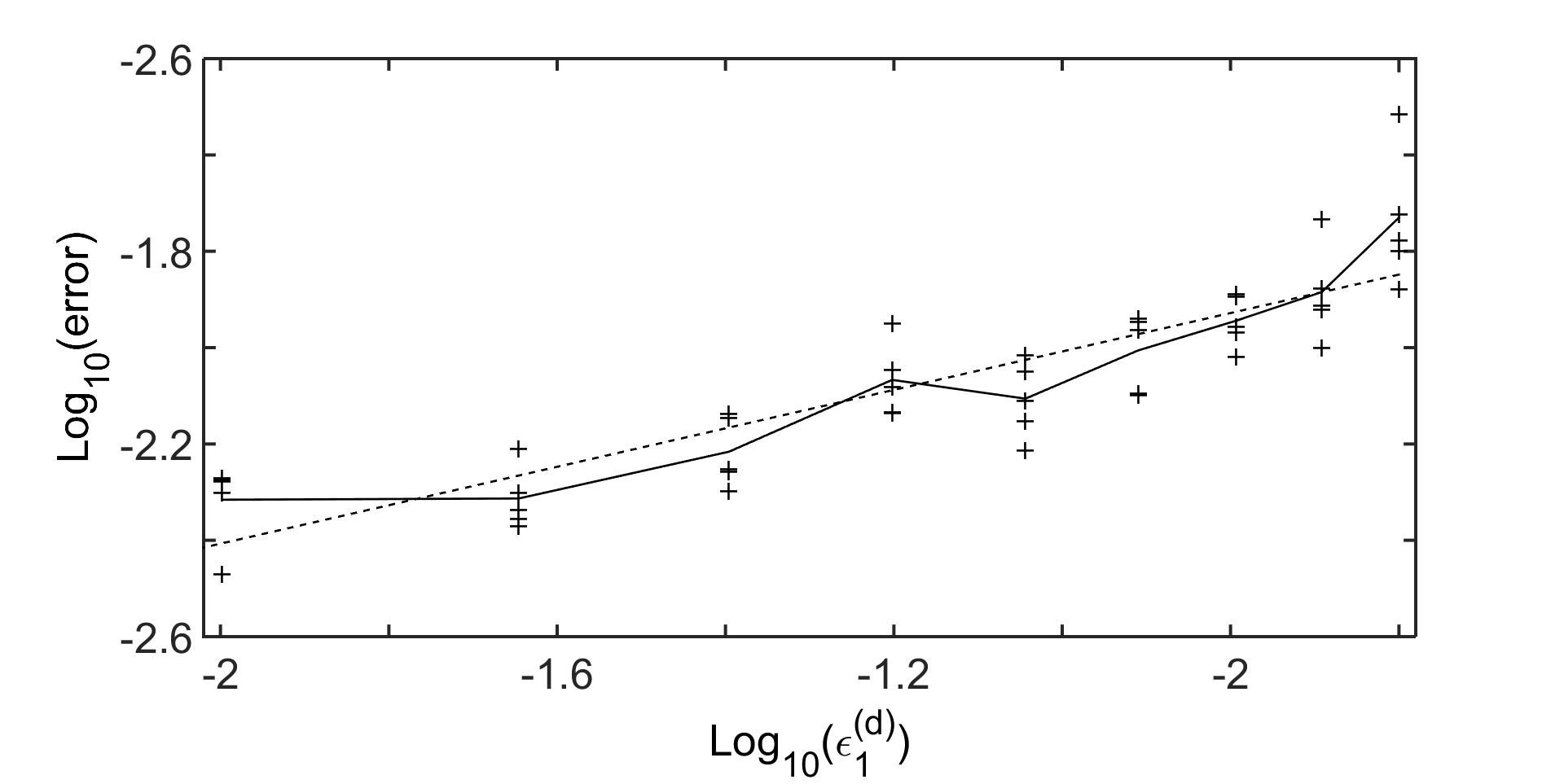}
\caption{
The reconstruction error as a function of the noise level $\epsilon^{(d)}_{1}$ (in log--log axes). We have used here 5 different realizations of the noise. The solid line is the average of these. 
Linear fitting (the dotted line) gives the estimated convergence the order 0.40.
}
\label{fourestimates}
\end{figure}

We also tested the method with a non-smooth velocity function $c_p$(see Figure \ref{estimatikka23}), where reconstructions of two different noise levels are shown. This case is not covered by the above analysis, but the reconstruction method is also robust in this case.

\begin{figure}
\centering
\includegraphics[scale=0.14]
{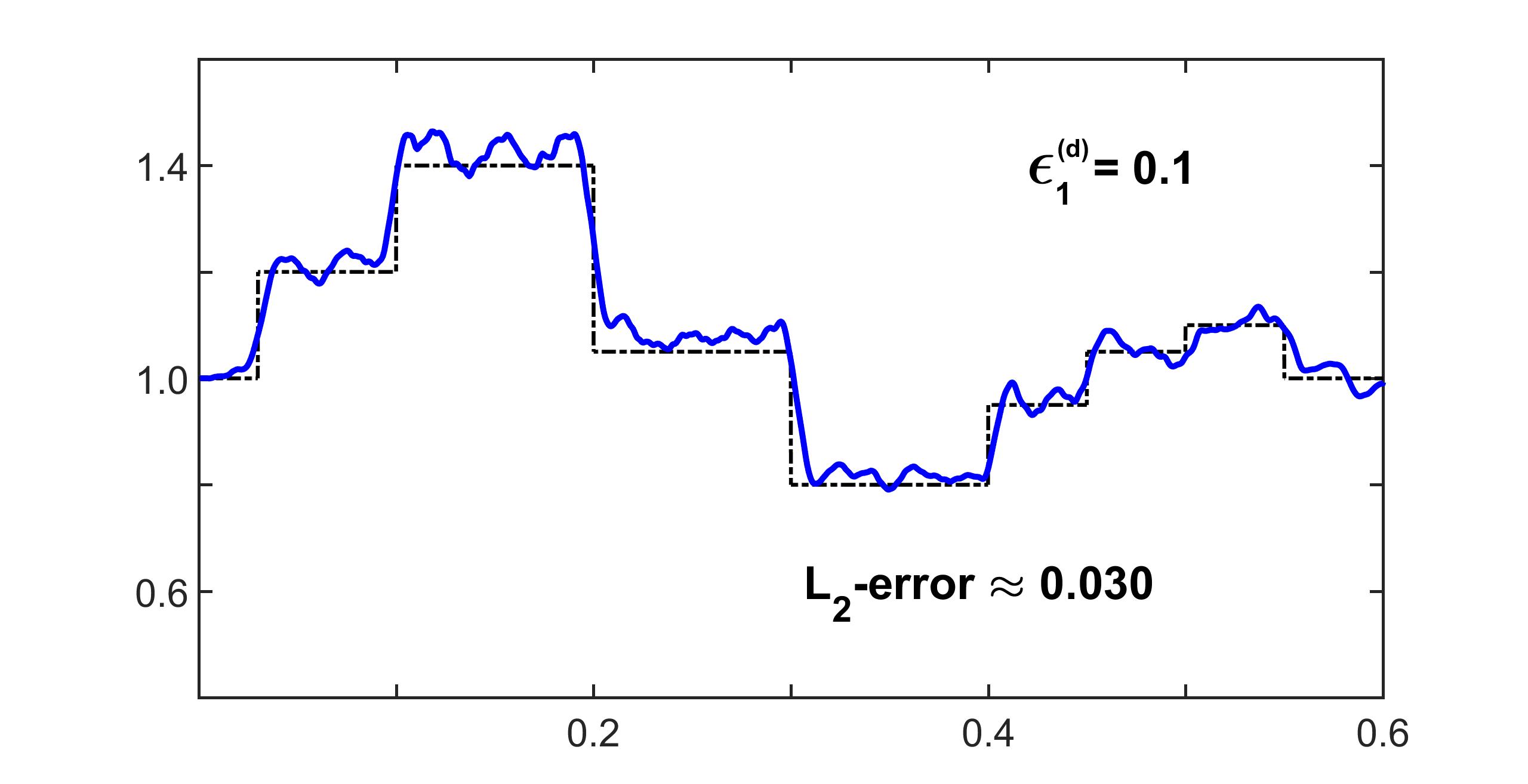}
\includegraphics[scale=0.14]
{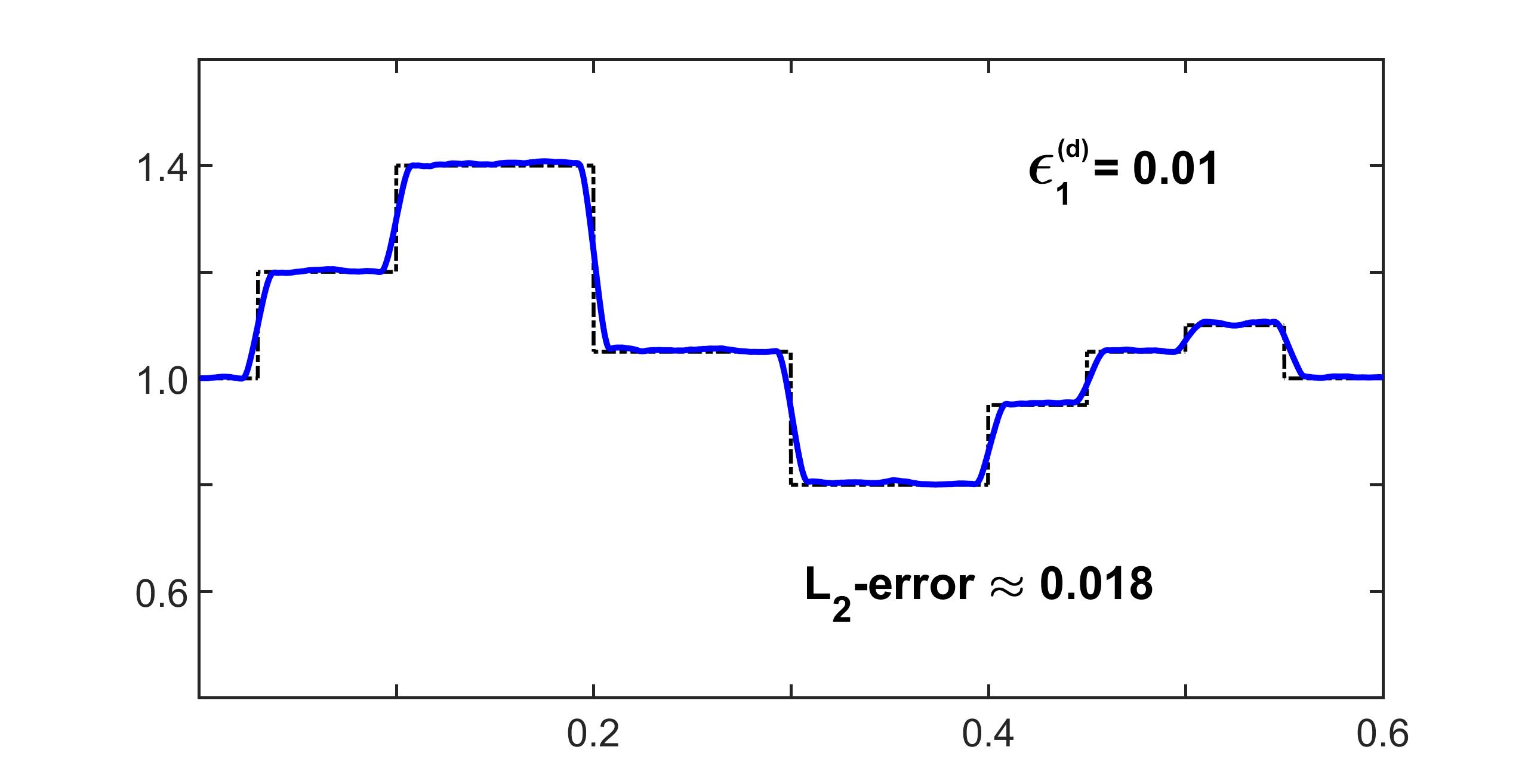}
\caption{
Two reconstructions (the solid blue lines) of a piecewise constant velocity function $c_p$ (the dashed lines).
{\em Top:} Noise level $\epsilon^{(d)}_1= 
 0.1$. {\em Bottom:} Noise level $\epsilon^{(d)}_1= 
 0.01$.
}
\label{estimatikka23}
\end{figure}

\subsection{Reconstruction results based on MDP}
\label{pamelapulkkinen}
Here we use a heuristic version of MDP as a parameter choice
rule for $\alpha=\alpha(\epsilon)$.
Typically MDP is applied to a Tikhonov regularization of the form
\begin{equation}
\label{classical_Tikhonov}
\min_x \norm{F(x) - y}^2 + \alpha \norm{x - x^*}^2,
\end{equation}
where $F$ is a model for the measurements, $y$ is the data, and $x^*$ plays the role of a selection criterion. 
In our case, the model $F$ corresponds to $c \mapsto \Lambda_{N_1}^{c}\phi_{1,N_1}$
and $y = \wtilde\Lambda_{N_1}^{c}\phi_{1,N_1}$ gives the measurement data, but we do not cast the inverse problem as a minimization problem and our regularization method is not of the Tikhonov type.
In particular, our method does not depend on the choice of the auxiliary parameter $x^*$, which can be viewed as an initial guess, and that chooses a local minimum of the non-linear optimization problem (\ref{classical_Tikhonov}).
Due to these differences, the existing results on MDP do not apply to our method, and (\ref{morozov}) below is only a heuristic analogue of the classical MDP.
We refer to \cite{Scherzer1993} for a study of MDP in an abstract context of the form (\ref{classical_Tikhonov}), with non-linear $F$. 

The heuristic principle that we use is as follows. We fix tuning parameters $h > 1$ and small $\delta > 0$ and search for a regularization parameter $\alpha$ in such a way that the following consistency condition holds:
\begin{align}
\label{morozov}
(h-\delta)\epsilon\le\norm{\Lambda_{N_1}^{\wtilde c^{N_1}_{\alpha}}\phi_{1,N_1}-y}_{L^2(0,2T)}
\le(h+\delta)\epsilon.
\end{align} 
Here $\epsilon > 0$ is the noise level, 
$y = \wtilde\Lambda_{N_1}^{c}\phi_{1,N_1}$ is again the measurement data, and $\Lambda_{N_1}^{\wtilde c^{N_1}_{\alpha}}\phi_{1,N_1}$ is the corresponding data computed with the velocity function $\wtilde c^{N_1}_{\alpha}$, given by the reconstruction method. 
Observe that (\ref{morozov}) is a relaxed version of (1.7) in \cite{Scherzer1993}.

We choose $h=1.1$ and $\delta = 0.01$ and use a bisection search to find $\alpha$. Our implementation was unable to find $\alpha$ satisfying the constraint (\ref{morozov}) for noise levels $\epsilon^{(d,k)}>0.02$.
For smaller noise levels, the regularization parameters found using the principle are summarized in Figure \ref{estimatee}.
We see that, with the above choice of tuning parameters, the heuristic MDP always gives a larger regularization parameter than (\ref{the_alpha}). The reconstructions are consistently worse than those produced by the choice (\ref{the_alpha}).

\begin{figure}
\centering
\includegraphics[scale=0.15]
{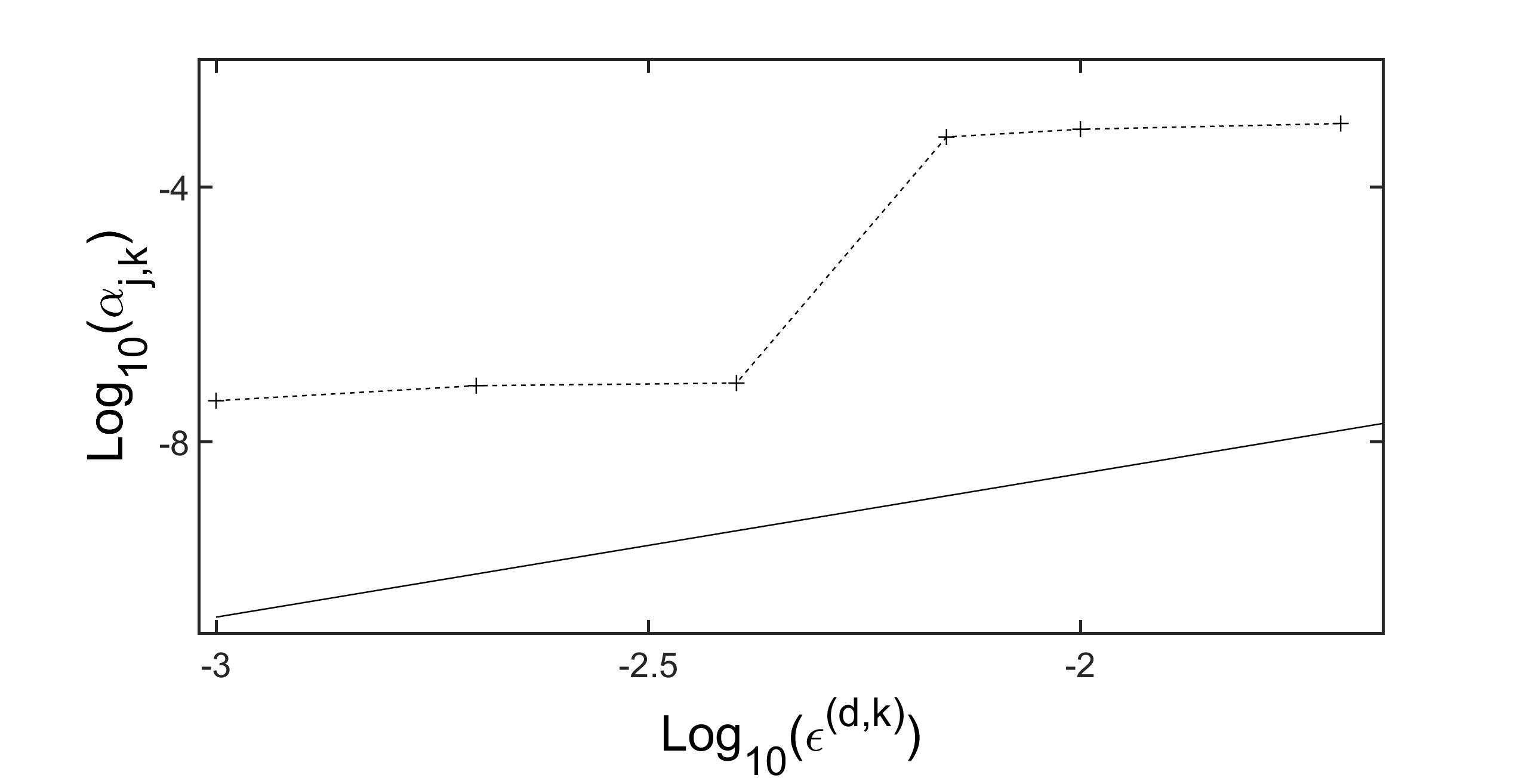}
\caption{
Regularization parameter $\alpha$, given by MDP as a function of the noise level (in log--log axes).
The straight line represents the relation (\ref{the_alpha}).
}
\label{estimatee}
\end{figure}

\noindent{\bf Acknowledgements.} We thank Samuli Siltanen for inspiring discussions on the regularization on inverse problems. 

L.\ Oksanen was partly supported by 
EPSRC, project EP/P01593X/1.
J.\ Korpela and M.\ Lassas were  supported by the Academy of Finland, projects 263235, 273979, 284715, and 312119.

\bibliographystyle{abbrv} 
\bibliography{main}
\end{document}